\theoremstyle{plain}
\newtheorem{theorem}{Theorem}[section]
\newtheorem{lemma}[theorem]{Lemma}
\newtheorem{proposition}[theorem]{Proposition}
\newtheorem{conjecture}[theorem]{Conjecture}
\theoremstyle{definition}
\newtheorem{example}[theorem]{Example}
\newtheorem{remark}[theorem]{Remark}
\numberwithin{equation}{section}
\DeclareMathOperator{\Prob}{P}
\DeclareMathOperator{\supp}{supp}
\title{Markov chains on finite fields with deterministic jumps}
\author{Jimmy He}
\address{Department of Mathematics, MIT, Cambridge, MA  02139}
\email{jimmyhe@mit.edu}
\begin{document}
\maketitle
\begin{abstract}
    We study the Markov chain on $\mathbf{F}_p$ obtained by applying a function $f$ and adding $\pm\gamma$ with equal probability. When $f$ is a linear function, this is the well-studied Chung--Diaconis--Graham process. We consider two cases: when $f$ is the extension of a non-linear rational function which is bijective, and when $f(x)=x^2$. In the latter case, the stationary distribution is not uniform and we characterize it when $p=3\pmod{4}$. In both cases, we give an almost linear bound on the mixing time, showing that the deterministic function dramatically speeds up mixing. The proofs involve establishing bounds on exponential sums over the union of short intervals.
\end{abstract}

\section{Introduction}
Recent work of Chatterjee and Diaconis studied how Markov chains can be sped up by interspersing a deterministic function between steps of the Markov chain \cite{CD20}. For "most" bijective functions, the mixing time becomes logarithmic in the size of the state space. They asked for concrete examples where this speedup takes place. Currently, one of the only known examples is the Chung--Diaconis--Graham process, which is a Markov chain on the finite field $\mathbf{F}_p$ given by
\begin{equation}
\label{eq: cdg}
    X_{n+1}=aX_n+\varepsilon_{n+1},
\end{equation}
where the $\varepsilon_n$ are uniform on $\{-1,0,1\}$ and independent. While the walk $X_{n+1}=X_n+\varepsilon_{n+1}$ mixes in order $p^2$ steps, for certain $a\in\mathbf{F}_p$ and almost all $p$, the walk defined by \eqref{eq: cdg} mixes in order $\log(p)$ steps \cite{EV20}.

The purpose of this paper is to study the mixing times of non-linear analogues of \eqref{eq: cdg}. In particular, for bijections which are extensions of rational functions on $\mathbf{F}_p$, we are able to show that the mixing time is faster than $p^{1+\varepsilon}$ for any $\varepsilon>0$. This provides more examples of Markov chains which mix faster after adding deterministic jumps. We also consider the case when $f(x)=x^2$. We consider a slightly different version of the Markov chains considered in \cite{CD20}, and so we also explain how to apply our methods to the Markov chains appearing there.

\subsection{Main results}
Let us now define the class of functions $f$ that we will work with. For a prime $p$ and $d\in\mathbf{N}$, let $\mathcal{B}(p,d)$ denote the set of functions $f:\mathbf{F}_p\to\mathbf{F}_p$ which are bijections, and for which there exist coprime $P,Q\in\mathbf{F}_p[x]$ polynomials of degree at most $d$ such that $f=P/Q$ except at the zeroes of $Q$, and such that $P/Q$ is not a linear or constant function. 

\begin{remark}
Some examples of functions $f$ to keep in mind are permutation polynomials, which are polynomials $P\in\mathbf{F}_p[x]$ that are also bijections, and the function $f(x)=x^{-1}$ for $x\neq 0$ and $f(0)=0$. A specific family of permutation polynomials is given by $f(x)=x^3$ for primes $p>3$ with $(3,p-1)=1$.
\end{remark}

\begin{theorem}
\label{thm: bijective}
Let $p$ be a prime and $d\in\mathbf{N}$. Let $f\in\mathcal{B}(p,d)$, and let $\gamma\in\mathbf{F}_p$ be non-zero. Let $\varepsilon_i$ denote independent random variables uniform on $\pm\gamma$. Consider the lazy Markov chain defined by
\begin{equation*}
    X_{n+1}=\begin{cases}f(X_n)+\varepsilon_{n+1} &\text{ with probability }1/2,
    \\ X_n &\text{ with probability }1/2.
    \end{cases}
\end{equation*}
Then if $\pi$ denotes the uniform distribution on $\mathbf{F}_p$, for any $\varepsilon>0$, there is some constant $C=C(\varepsilon,d)$ such that
\begin{equation*}
    \sup_{X_0\in\mathbf{F}_p}\|\Prob(X_n\in\cdot)-\pi\|_{TV}\leq e^{-C\frac{n}{p^{1+\varepsilon}}}.
\end{equation*}
\end{theorem}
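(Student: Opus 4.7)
The goal is to show that every non-zero Fourier coefficient of the law of $X_n$ contracts geometrically in $n$. Writing $e(t)=e^{2\pi i t}$, $\hat\mu_n(\xi)=\E[e(\xi X_n/p)]$ and $c(\xi)=\cos(2\pi\xi\gamma/p)$, the standard upper bound lemma gives
\[
\bigl\|\Prob(X_n\in\cdot)-\pi\bigr\|_{TV}^2 \;\leq\; \tfrac14\sum_{\xi\neq 0}|\hat\mu_n(\xi)|^2,
\]
and conditioning on the laziness coin and on $\varepsilon_{n+1}$ yields the one-step recursion
\[
\hat\mu_{n+1}(\xi) \;=\; \tfrac12\hat\mu_n(\xi) + \tfrac12 c(\xi)\,\E\bigl[e(\xi f(X_n)/p)\bigr].
\]
The second term is the character of the $f$-pushforward of $\mu_n$, and it is here that one must extract cancellation.

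To produce that cancellation, I would iterate the recursion over a block of $k=O(\log p)$ steps. Unfolding the chain and conditioning on $X_{n-k}$ together with the laziness coins during the block, $\E[e(\xi f(X_n)/p)\mid X_{n-k}]$ becomes an average over sign patterns $\varepsilon\in\{\pm\gamma\}^L$ (with $L$ the number of non-lazy steps in the block) of exponentials evaluated at nested compositions of $f$. Grouping by the most recent $\pm\gamma$ additions reduces the task to bounding exponential sums of the form
\[
S_A(\xi) = \sum_{x\in A} e(\xi f(x)/p),
\]
where $A\subset\mathbf{F}_p$ is a union of arithmetic progressions, i.e.\ short intervals after rescaling by $2\gamma$, recording the range of accumulated $\pm\gamma$ offsets. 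Since $f\in\mathcal{B}(p,d)$ is a non-linear rational function of degree at most $d$, Weil's theorem yields the complete-sum estimate $\bigl|\sum_{x\in\mathbf{F}_p}e(\xi f(x)/p)\bigr|=O_d(\sqrt p)$ for every $\xi\neq 0$; a completion-of-sum argument expanding $\mathbf{1}_A$ in Fourier and applying Weil at each dual frequency transfers this cancellation to $S_A(\xi)$, with a polylogarithmic loss proportional to the number of components of $A$.

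The technical heart of the argument, and the main expected obstacle, is exactly this estimate for exponential sums over unions of short intervals. The completion method produces a useful saving over the trivial bound $|A|$ only once $|A|$ exceeds $\sqrt p$ by a small power of $p$, so the block length $k$ must be chosen large enough that the $\pm\gamma$ additions have generated sufficiently many distinct points, yet small enough that the resulting per-block contraction factor takes the form $1-c'/p^{1+\varepsilon}$ uniformly in $\xi\neq 0$ and in the starting state $X_0$. Once this uniform contraction $|\hat\mu_{n+k}(\xi)|^2\le (1-c'/p^{1+\varepsilon})|\hat\mu_n(\xi)|^2$ is in place, iterating over $\lfloor n/k\rfloor$ blocks and summing via the upper bound lemma produces the claimed estimate $\bigl\|\Prob(X_n\in\cdot)-\pi\bigr\|_{TV}\leq e^{-Cn/p^{1+\varepsilon}}$.
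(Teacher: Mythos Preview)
Your plan has a genuine gap at its core: the Fourier recursion you write down does \emph{not} decouple frequency by frequency once $f$ is nonlinear. The term $\E[e(\xi f(X_n)/p)]$ is not a function of $\hat\mu_n(\xi)$ alone; expanding $\mu_n$ by Fourier inversion gives
\[
\E\bigl[e(\xi f(X_n)/p)\bigr]=\frac{1}{p}\sum_{\eta}\hat\mu_n(\eta)\sum_{x}e_p(\xi f(x)-\eta x),
\]
so the map on Fourier coefficients is conjugation of the permutation operator of $f$ by the discrete Fourier transform, a full unitary with no reason to be nearly diagonal. Consequently there is no inequality of the form $|\hat\mu_{n+k}(\xi)|^2\le(1-c'/p^{1+\varepsilon})|\hat\mu_n(\xi)|^2$ to iterate; any contraction one can hope for is only at the level of the whole $\ell^2$ vector, which is just the spectral gap of the chain again and gains nothing from Fourier analysis. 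This is precisely the obstruction that makes the nonlinear problem different from the Chung--Diaconis--Graham case, where $f(x)=ax$ and the recursion really does permute frequencies.

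The second gap is the claim that unfolding a block of $k$ steps produces exponential sums over a union of arithmetic progressions. Conditioning on $X_{n-k}=x_0$, after two non-lazy steps one already has $X_{n-k+2}=f(f(x_0)\pm\gamma)\pm\gamma$; the inner $\pm\gamma$ passes through $f$, so the four reachable points have no arithmetic-progression structure whatsoever. The ``accumulated $\pm\gamma$ offsets'' do not accumulate additively once a nonlinear $f$ is interposed, so the set $A$ you want to sum over is not a union of intervals and the completion-of-sum argument does not apply.

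The paper's proof takes a completely different route: it bounds the Cheeger constant directly. The arithmetic-progression structure enters not through iteration but through the \emph{static} geometry of the transition graph: the points $x$ and $x+2\gamma$ share the common neighbour $f^{-1}(x+\gamma)$, so every endpoint of a maximal $2\gamma$-progression in $S$ forces an edge across the cut $(S,S^c)$. Either $S$ has many such progressions, in which case the cut is automatically large, or $S$ is a union of few long $2\gamma$-progressions, and then one counts solutions to $f(x)\in S^c$ with $x\in S$ using Weil-based bounds for $\sum_{x\in I_j}e_p(kf(x))$ summed over the constituent progressions $I_j$. This dichotomy is what makes the exponential-sum input (Proposition~\ref{prop: main sol estimate}) applicable, and it has no analogue in the Fourier-iteration picture you sketch.
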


\begin{remark}
The \emph{mixing time} $t_{mix}(\varepsilon)$ of an ergodic Markov chain $P$ is the minimum $n$ such that
\begin{equation*}
    \sup_{X_0\in\mathbf{F}_p}\|\Prob(X_n\in\cdot)-\pi\|_{TV}\leq \varepsilon,
\end{equation*}
where $\pi$ is the stationary distribution. The total variation bound in Theorem \ref{thm: bijective} (and the corresponding bounds in Theorems \ref{thm: sq and add mixing time} and \ref{thm: bij other lazy}) imply that $t_{mix}(\varepsilon)$ is $O(p^{1+\delta})$ for any $\delta>0$, assuming that $f$ is the extension of rational functions of bounded degree.

We also remark that the $O(p^{1+\delta})$ could be improved to $O(p\log^Cp)$ for some large constant $C$ (probably $C=100$ would work). Since the bound we obtain is most likely far from optimal (see Remark \ref{rmk: lower bound}), we do not bother to explicitly state the stronger bound.
\end{remark}

\begin{remark}
	The assumption that the $\varepsilon_i$ are uniform on $\pm \gamma$ is easily relaxed. By considering the bijection $f+\gamma_0$, we can choose the $\varepsilon_i$ to be supported on any two point set. Since the bound from Cheeger's inequality can only get worse as edges are removed, this extends to any distribution $\mu$ for the $\varepsilon_i$ as long as $\mu$ is supported on at least two points, with the constant depending on $\min_{\gamma\in \supp \mu} \mu(\gamma)$.
	
	The laziness can also be removed at the cost of some assumptions on the distribution of the $\varepsilon_i$, see Theorem \ref{thm: bij other lazy}.
\end{remark}

\begin{remark}
This random walk is a slight modification of the walks considered in \cite{CD20}, which always applies the function $f$, but possibly adds $0$ instead of $\pm \gamma$. A small modification to the argument allows a similar result to be established in this setting as well, see Theorem \ref{thm: bij other lazy}. This gives a large class of examples where adding a deterministic bijection dramatically speeds up the mixing time. Finding such examples was a question raised in \cite{CD20}.
\end{remark}
\begin{remark}
\label{rmk: lower bound}
By an entropy argument, it's easy to see that at least order $\log(p)$ steps are necessary in order for the random walk to be close to uniform. This leaves a large gap between the upper and lower bounds and determining the correct order of the mixing time seems to be a challenging problem.
\end{remark}

The proof of Theorem \ref{thm: bijective} involves applying a version of Cheeger's inequality for directed graphs to reduce the problem to giving a lower bound for the number of solutions to $y=f(x)\pm \gamma$ with $x\in S$ and $y\in S^c$. This bound is obtained using some ideas from analytic number theory, namely exponential sum bounds, with a key ingredient being the Weil bound. While applications of the Weil bound have previously appeared in the study of expanders (see \cite{C89} for example), our approach does not require the eigenvalues to be exactly identified. This provides a more flexible approach, but gives a weaker bound.

The main technical tools are exponential sum bounds of the form
\begin{equation}
\label{eq: exp sum bd intro}
\sum_{j=1}^J\left|\sum_{x\in I_j} \exp(2\pi i kf(x)/p)\right|\leq Cp\log^2(L),
\end{equation}
where the $I_j$ are disjoint intervals of length $L$, and $C$ is some constant depending on $f$, along with bounds for linear exponential sums. These bounds appear to be new, although the proof of \eqref{eq: exp sum bd intro} closely follows ideas of Browning and Haynes \cite{BH13}, who proved the bound when $f(x)=1/x$.

Finally, we also give some results for the square-and-add Markov chain, which is the case when $f(x)=x^2$. This is obviously not a bijection, but when $p=3\pmod{4}$, the stationary distribution can be determined and a similar mixing time bound can be established.

\begin{theorem}
\label{thm: sq and add}
Let $p$ be a prime, $p=3\pmod{4}$, and let $\gamma\in\mathbf{F}_p$ be non-zero. Let $\varepsilon_i$ denote independent random variables uniform on $\pm\gamma$. Define a Markov chain on $\mathbf{F}_p$ by $X_{n+1}=X_n^2+\varepsilon_{n+1}$. Then the chain has a unique recurrent communicating class, and thus a unique stationary distribution. The stationary distribution is given by
\begin{equation*}
    \pi(\alpha)=\frac{|\{\beta\in\mathbf{F}_p\mid \beta^2\pm\gamma=\alpha\}|}{2p}.
\end{equation*}
Furthermore, the Markov chain is aperiodic if $\gamma=1$.
\end{theorem}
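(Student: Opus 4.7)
My plan is to exhibit the stationary distribution explicitly, verify invariance, pin down the recurrent class, and handle aperiodicity for $\gamma=1$ by hand. Writing $N(y)=|\{\beta\in\mathbf{F}_p:\beta^2=y\}|=1+\chi(y)$ with $\chi$ the Legendre symbol (and $\chi(0)=0$), the formula becomes $\pi(\alpha)=(N(\alpha-\gamma)+N(\alpha+\gamma))/(2p)$, and $\sum_\alpha \pi(\alpha)=1$ since each of the two terms telescopes to $p$. To verify $\pi P=\pi$, I would expand $(\pi P)(\alpha)=\tfrac12\sum_{\beta^2=\alpha-\gamma}\pi(\beta)+\tfrac12\sum_{\beta^2=\alpha+\gamma}\pi(\beta)$ and evaluate $\pi(\delta)+\pi(-\delta)$ at the two roots of a nonzero square $y=\alpha\mp\gamma$; the four characters $\chi(\pm\delta\pm\gamma)$ that arise cancel in pairs precisely because $\chi(-1)=-1$ for $p\equiv 3\pmod 4$, giving $\pi(\delta)+\pi(-\delta)=2/p=(1+\chi(y))/p$. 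The cases $y=0$ (using $\chi(-\gamma)+\chi(\gamma)=0$) and $y$ a non-residue are handled analogously, so $(\pi P)(\alpha)=\pi(\alpha)$.

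Next, the support of $\pi$ is $S:=\{x^2+\gamma:x\in\mathbf{F}_p\}\cup\{x^2-\gamma:x\in\mathbf{F}_p\}$, which is exactly the image of one step of the chain. Any state outside $S$ transitions into $S$ and cannot return, so it is transient; every state in $S$ is recurrent because $\pi$ charges it. The main obstacle is to show that $S$ is a \emph{single} closed communicating class (equivalently, that $\pi$ is the unique stationary distribution). The approach I would try is to prove strong connectivity of the restricted chain on $S$ directly: since $\alpha$ and $-\alpha$ have identical out-neighbours $\{\alpha^2\pm\gamma\}$, any closed sub-class of $S$ must be stable under $\alpha\mapsto-\alpha$ (wherever this lies in $S$), and combined with a growth bound on the forward-reachable set under iterates of $f_\pm(x)=x^2\pm\gamma$, this should force the forward-closure of any $\alpha\in S$ to exhaust $S$.

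Finally, for aperiodicity when $\gamma=1$, the path $0\to 1\to 0$ (via $+1,-1$) is a return of length $2$, so it suffices to produce an odd return. A self-loop at $\alpha$ requires $\alpha^2-\alpha\pm 1=0$ to have a root, equivalently $\chi(-3)=1$ or $\chi(5)=1$; combined with strong connectivity of $S$ this handles most primes. For the residual primes where neither $-3$ nor $5$ is a QR (those with $p\equiv 11\pmod{12}$ and $p\equiv\pm 2\pmod 5$), I would exhibit an explicit odd cycle in the forward-orbit of $0$ by tracing iterates $0\to 1\to 2\to 5\to\cdots$ and arguing with a short case analysis that some such orbit closes up at an odd step.
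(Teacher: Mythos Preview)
Your verification that $\pi$ is stationary is correct and is essentially a Legendre-symbol repackaging of the paper's counting argument: your identity $\pi(\delta)+\pi(-\delta)=2/p$ is exactly the paper's observation that $|\{\rho:\rho^2\pm\gamma=\alpha\}|+|\{\rho:\rho^2\pm\gamma=-\alpha\}|=4$, and both rest on $\chi(-1)=-1$.

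The genuine gaps are in the other two parts. For strong connectivity of $S=\supp\pi$, your observation about $\alpha$ and $-\alpha$ sharing out-neighbours is useful (and, combined with recurrence, does yield that $\alpha$ and $-\alpha$ lie in the same class when both are in $S$, though your one-line justification does not quite say this). But the proposed ``growth bound on the forward-reachable set under iterates of $f_\pm$'' is not a proof: controlling orbit sizes of $x\mapsto x^2\pm\gamma$ on $\mathbf{F}_p$ is exactly the hard dynamical question one is trying to sidestep, and no such bound is known in general. The paper's argument is far more structured. In addition to the move $\alpha\leftrightarrow-\alpha$, it shows that the move $\alpha\mapsto\alpha\pm 2\gamma$ can always be realized \emph{within a single communicating class of $\supp\pi$}: one passes to a square root of $\alpha\pm\gamma$ (one of whose signs lies in $\supp\pi$, hence is recurrent and reachable from $\alpha$) and then squares back with the opposite sign of $\gamma$. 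Crucially, since one of $\alpha+\gamma$, $-\alpha-\gamma$ is always a quadratic residue, either $\{\alpha,\alpha+2\gamma\}\subseteq\supp\pi$ or $\{-\alpha,-\alpha-2\gamma\}\subseteq\supp\pi$, so one can walk through the cosets $\{\pm(\alpha+2k\gamma)\}$ while staying in $\supp\pi$. As $2\gamma$ generates $\mathbf{F}_p$ additively, this reaches every pair $\{\pm\beta\}$ and hence all of $\supp\pi$.

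For aperiodicity at $\gamma=1$, your self-loop criterion $\chi(-3)=1$ or $\chi(5)=1$ misses infinitely many primes (those with $p\equiv 11\pmod{12}$ and $p\equiv\pm 2\pmod 5$), and ``tracing iterates $0\to 1\to 2\to 5\to\cdots$ with a short case analysis'' cannot close this: the orbit depends on $p$, so there is no finite list of cases. The paper instead runs a parity argument: assuming every path $\alpha^2+1\to\alpha$ has odd length, it deduces that each of the basic moves $\alpha\mapsto-\alpha$ and $\alpha\mapsto\alpha\pm 2$ (inside $\supp\pi$) is realized by an even-length path, and then invokes the connectivity argument above to assemble an even-length path from $\alpha^2+1$ back to $\alpha$, a contradiction. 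That argument is only available once you have established connectivity via those specific moves, which your proposal does not do.
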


\begin{theorem}
\label{thm: sq and add mixing time}
Let $p$ be a prime, $p=3\pmod{4}$, and let $\gamma\in\mathbf{F}_p$ be non-zero. Let $\varepsilon_i$ denote independent random variables uniform on $\pm\gamma$. Consider the lazy Markov chain defined by
\begin{equation*}
    X_{n+1}=\begin{cases}X_n^2+\varepsilon_{n+1} &\text{ with probability }1/2,
    \\ X_n &\text{ with probability }1/2.
    \end{cases}
\end{equation*}
Then if $\pi$ denotes the stationary distribution on $\mathbf{F}_p$, for any $\varepsilon$, there is some constant $C=C(\varepsilon)$ such that
\begin{equation*}
    \sup_{X_0\in\mathbf{F}_p}\|\Prob(X_n\in\cdot)-\pi\|_{TV}\leq e^{-C\frac{n}{p^{1+\varepsilon}}}.
\end{equation*}
\end{theorem}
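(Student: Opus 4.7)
The plan is to adapt the proof of Theorem~\ref{thm: bijective} to $f(x)=x^2$, which requires handling the non-uniform stationary distribution of Theorem~\ref{thm: sq and add} and proving a quadratic analog of the exponential sum bound \eqref{eq: exp sum bd intro}. The overall architecture is the same: apply a Cheeger-type inequality to reduce mixing to a lower bound on the conductance, then use Fourier inversion together with the exponential sum estimate to control the conductance.

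First, I would apply a version of Cheeger's inequality for non-reversible Markov chains (for instance, by passing to the additive reversibilization $(P+P^\ast)/2$), giving a bound of the form
\begin{equation*}
\|\Prob(X_n\in\cdot)-\pi\|_{TV}\le \Bigl(\min_{x\in\supp\pi}\pi(x)\Bigr)^{-1/2}\Bigl(1-\tfrac{1}{2}\Phi^2\Bigr)^n,
\end{equation*}
where $\Phi=\inf_{\pi(S)\le 1/2}Q(S,S^c)/\pi(S)$ and $Q(S,S^c)=\sum_{x\in S}\pi(x)P(x,S^c)$. Theorem~\ref{thm: sq and add} tells us that the non-zero values of $\pi(x)$ are all of order $1/p$, so the prefactor contributes only $O(p^{1/2})$, which is absorbed into the exponential at a negligible $\log p$ time cost. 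For any $X_0$, a single non-lazy move sends $X_0$ to $X_0^2\pm\gamma\in\supp\pi$, so it suffices to bound the mixing time from initial states in $\supp\pi$, and hence to prove $\Phi\gg p^{-(1+\varepsilon)/2}$.

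Second, using laziness,
\begin{equation*}
Q(S,S^c)=\tfrac{1}{4}\sum_{x\in S}\pi(x)\bigl(\mathbf{1}_{S^c}(x^2+\gamma)+\mathbf{1}_{S^c}(x^2-\gamma)\bigr).
\end{equation*}
Substituting the preimage formula $\pi(x)=|\{(\beta,\tau):\beta^2+\tau\gamma=x\}|/(2p)$ converts the weighted sum into an unweighted sum over $(\beta,\tau)\in\mathbf{F}_p\times\{\pm 1\}$ of terms $\mathbf{1}_S(\beta^2+\tau\gamma)\,\mathbf{1}_{S^c}((\beta^2+\tau\gamma)^2+\sigma\gamma)$. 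Expanding both indicator functions by Fourier inversion on $\mathbf{F}_p$ produces a main term of order $\pi(S)\pi(S^c)\ge\pi(S)/2$ from the zero frequency, plus error terms whose inner sums are polynomial exponential sums in $\beta$ of degree at most four with non-trivial leading coefficient.

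The main technical obstacle is to establish an analog of \eqref{eq: exp sum bd intro} for the quadratic and quartic phases $g(\beta)\in\{\beta^2,\beta^4\}$ that arise, namely
\begin{equation*}
\sum_{j=1}^J\Bigl|\sum_{\beta\in I_j}\exp(2\pi ihg(\beta)/p)\Bigr|\le Cp\log^2 L
\end{equation*}
for any collection of disjoint intervals $I_j$ of length at most $L$ and any $h\not\equiv 0\pmod p$. The Browning--Haynes argument underlying \eqref{eq: exp sum bd intro} relies only on the Weil bound for complete polynomial sums together with a completion/geometric-sum manipulation, both of which apply directly to $\beta^2$ and $\beta^4$, so this adaptation should be routine. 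Once it is in place, substituting into the Fourier expansion dominates the error by the main term whenever $\pi(S)\gg p^{-1+\varepsilon}$, and the small-$\pi(S)$ regime can be dispatched by the elementary observation that $\pi(\{x\})\ge 1/(2p)$ on $\supp\pi$. Collecting the pieces yields $\Phi\gg p^{-(1+\varepsilon)/2}$ and hence the claimed total variation bound.
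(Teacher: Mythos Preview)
Your outline has a genuine gap in the conductance estimate. After you substitute the preimage formula for $\pi$ and Fourier-expand both indicators, the inner sum over $\beta$ runs over all of $\mathbf{F}_p$ and is therefore a \emph{complete} exponential sum; Weil gives $O(p^{1/2})$ for each such sum. To control the outer sums over the frequencies $k,\ell$ you then have only Cauchy--Schwarz combined with Parseval, which yields $\sum_{k}\bigl|\sum_{y\in S}e_p(-ky)\bigr|\le p|S|^{1/2}$. The resulting error term is of order $(|S||S^c|)^{1/2}p^{-1/2}$, whereas the main term is of order $|S||S^c|p^{-2}$; since $|S||S^c|\le p^2/4$, the ratio of main term to error is $O(p^{-1/2})$, so the error swamps the main term for \emph{every} $S$, not just when $\pi(S)$ is small. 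The interval bound you state for $g(\beta)\in\{\beta^2,\beta^4\}$ is correct in isolation, but it never enters the argument as written, because your $\beta$-sums are complete rather than restricted to intervals.

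The missing idea is the mechanism that produces the interval structure in the first place. In the paper this is a $2\gamma$-AP dichotomy (the analogue of Lemma~\ref{lem: bij red to int}): one decomposes $S$ into $J$ maximal arithmetic progressions with common difference $2\gamma$ and observes that each endpoint contributes an edge between $S$ and $S^c$ via a shared square-root neighbour. Hence either $J\ge |S|p^{-1/2-\varepsilon/2}$, in which case these boundary edges alone already give the conductance lower bound, or $J$ is small, the progressions are long, and Proposition~\ref{prop: main sol estimate} (this is where the incomplete-sum estimate for $f(x)=x^2$ is actually used) counts solutions of $y=x^2$ with $x$ and $y$ in the respective unions of APs. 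There is a second issue specific to $f(x)=x^2$: because $\alpha$ and $-\alpha$ have the same image, one must first reduce, again by an edge-counting argument costing only a constant factor, to sets $S$ that are symmetric under $x\mapsto -x$ within $\supp\pi$; this symmetrization is what makes both the endpoint argument and the passage between $\supp\pi$ and $\mathbf{F}_p$ work cleanly when $p\equiv 3\pmod 4$. Neither reduction appears in your sketch, and without them the Fourier/Weil route does not close.
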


These results partially answer some questions raised in \cite{DHI20}. The case of $p=1\pmod{4}$ seems to be more mysterious and even the stationary distribution is not well-understood. Some heuristics and a conjecture are given in Section \ref{sec: p=1 mod 4}.

\subsection{Related work}
The Markov chains considered in this paper can be viewed as a non-linear analogue of the Chung--Diaconis--Graham process, defined by $X_{n+1}=aX_n+\varepsilon_{n+1}$ on $\mathbf{F}_p$ (or more generally on $\mathbf{Z}/(p)$ for composite $p$). This was first introduced in \cite{CDG87}, and studied in \cite{H06, H09, N11, H19, BV19}. Recent work of Eberhard and Varj\'u established cutoff for this Markov chain for many values of $a$ \cite{EV20}. Specifically, they show that when $a=2$, the Markov chain has cutoff at $c\log_2 p$ for an explicit $c\approx 1.01136$ for almost all $p$, and extend this to many other values of $a$. These works all rely heavily on the linear nature of the problem.

Chatterjee and Diaconis studied how deterministic jumps can increase the speed of convergence for Markov chains \cite{CD20}. They showed that for a state space of size $n$, applying a (fixed) random bijection gives a mixing time of order $\log(n)$ with high probability, but noted that specific examples were hard to find. The random walks treated in this paper provide some progress in this direction. This fits into a broader theme of finding ways to accelerate the convergence of Markov chains to their stationary distribution \cite{HSS20, BQZ20, ABLS07}.

The Markov chain when $f(x)=x^2$ studied in this paper can be viewed as a random version of a discrete dynamical system on $\mathbf{F}_p$. The dynamical system defined by iterating the map $x\mapsto x^2+c$ has been well-studied although many questions remain, see \cite{PMMY01, VS04, KLMMSS16}.

The exponential sum bounds are obtained using arguments that closely follows that of \cite{BH13}, who considered the case when $f(x)=1/x$. Their work is based on an argument Heath-Brown \cite{HB13}. These arguments work for any finite field, not just prime fields. It would be interesting to see if these ideas could be useful in studying the mixing times of random walks on $\mathbf{F}_q$, for $q$ a prime power.

\subsection{Outline}
Section \ref{sec: sq and add} establishes some basic properties of the square-and-add Markov chain, including the proof of Theorem \ref{thm: sq and add}. In Section \ref{sec: exp sum bounds}, we prove the exponential sum bounds needed, which are the main technical tool. Finally, in Section \ref{sec: mix bound}, we apply the bounds from Section \ref{sec: exp sum bounds} to prove Theorem \ref{thm: bijective}, and explain how to extend the arguments to establish Theorem \ref{thm: sq and add mixing time} and to study some of the random walks defined in \cite{CD20}.

\subsection{Notation}
Let $e_p(x)=\exp(2\pi i x/p)$. We will use $C$ to denote a constant that may change from line to line.

\section{The square-and-add Markov chain}
\label{sec: sq and add}
In this section, we establish some basic properties of the Markov chain when $f(x)=x^2$. The reason this case is more involved is that $f$ is not a bijection, and so even the stationary distribution is not obvious. While the case of $p=1\pmod{4}$ seems wild, surprisingly the case of $p=3\pmod{4}$ is tractable. 

\subsection{The case \texorpdfstring{$p=3\pmod 4$}{p=3 mod 4}}
The reason that the case of $p=3\pmod{4}$ is much easier is that in this case, exactly one of $\alpha$ and $-\alpha$ is a quadratic residue mod $p$. Using this fact, we are able to prove all essential properties about the Markov chain.

\begin{lemma}
\label{lem: stationary distr}
A stationary distribution for the Markov chain defined in Theorem \ref{thm: sq and add} is given by
\begin{equation*}
    \pi(\alpha)=\frac{|\{\beta\in\mathbf{F}_p\mid \beta^2\pm\gamma=\alpha\}|}{2p}.
\end{equation*}
\end{lemma}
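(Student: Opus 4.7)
The plan is to recognize $\pi$ as a pushforward of the uniform distribution. Specifically, let $U$ be uniform on $\mathbf{F}_p$ and $\varepsilon$ an independent sign uniform on $\pm\gamma$. Then inspection of the formula gives
\[
\Prob(U^2+\varepsilon=\alpha)=\tfrac12\Prob(U^2=\alpha-\gamma)+\tfrac12\Prob(U^2=\alpha+\gamma)=\pi(\alpha),
\]
so $\pi$ is precisely the law of $U^2+\varepsilon$. This representation is valuable because stationarity now reduces to showing that if $X\sim\pi$ and $\varepsilon'$ is an independent fresh sign, then $X^2+\varepsilon'\sim\pi$; since $U^2+\varepsilon'$ already has law $\pi$, it suffices to prove the distributional identity $X^2\stackrel{d}{=}U^2$.

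To establish this identity, I would exploit that $p\equiv 3\pmod 4$ forces $-1$ to be a non-residue. Letting $q(z)$ denote the number of square roots of $z$ in $\mathbf{F}_p$ (so $q(0)=1$ and $q(z)\in\{0,2\}$ otherwise), this non-residuosity gives the pointwise identity $q(z)+q(-z)=2$ for every $z\in\mathbf{F}_p$. Plugging this into the definition of $\pi$, for $y\neq 0$ one obtains
\[
\pi(y)+\pi(-y)=\frac{q(y-\gamma)+q(-(y-\gamma))+q(y+\gamma)+q(-(y+\gamma))}{2p}=\frac{4}{2p}=\frac{2}{p},
\]
while $\pi(0)=(q(-\gamma)+q(\gamma))/(2p)=1/p$ directly.

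With these values in hand, I evaluate $\Prob(X^2=z)=\sum_{x:x^2=z}\pi(x)$ in three cases: if $z$ is a non-residue the sum is empty and equals $0$; if $z=0$ it equals $\pi(0)=1/p$; and if $z=\beta^2$ with $\beta\neq 0$ it equals $\pi(\beta)+\pi(-\beta)=2/p$. This matches $\Prob(U^2=z)=q(z)/p$ in every case, so $X^2\stackrel{d}{=}U^2$, hence $X^2+\varepsilon'\stackrel{d}{=}U^2+\varepsilon'$, which has law $\pi$ by construction. This verifies $\pi P=\pi$.

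The only substantive step is the use of $p\equiv 3\pmod 4$ via the identity $q(z)+q(-z)=2$. Without it, $\pi(y)+\pi(-y)$ fails to be a constant, the squared distribution no longer collapses to that of $U^2$, and the whole chain of reductions breaks down — consistent with the author's remark that the $p\equiv 1\pmod 4$ case is genuinely more mysterious.
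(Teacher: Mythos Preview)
Your proof is correct. Both your argument and the paper's hinge on the same identity coming from $p\equiv 3\pmod 4$: in your notation $q(z)+q(-z)=2$ for all $z$, or equivalently $\pi(\alpha)+\pi(-\alpha)=2/p$. The paper verifies the balance equation $\sum_{\alpha^2\pm\gamma=\beta}\tfrac12\pi(\alpha)=\pi(\beta)$ directly, grouping the summands into $\pm\alpha$ pairs and then treating $\beta=\pm\gamma$ (where $\alpha=0$ appears unpaired) as a separate edge case. Your reformulation---recognizing $\pi$ as the law of $U^2+\varepsilon$ for $U$ uniform, and reducing stationarity to the distributional identity $X^2\stackrel{d}{=}U^2$---is a cleaner packaging of the same computation: because you state $q(z)+q(-z)=2$ for all $z$ including $z=0$, the boundary case is absorbed automatically and no separate check is needed. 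The conceptual gain is modest but real: your version makes transparent \emph{why} this particular $\pi$ is the right guess (it is the one-step pushforward of uniform), whereas the paper simply posits the formula and verifies it.
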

\begin{proof}
The result can be shown by checking that
\begin{equation*}
    \sum_{\alpha^2\pm \gamma=\beta}\frac{1}{2}\pi(\alpha)=\pi(\beta)
\end{equation*}
for all $\beta\in\mathbf{F}_p$, which is equivalent to showing that
\begin{equation}
\label{eq: stationary equation}
    \sum_{\alpha^2\pm\gamma=\beta}|\{\rho\mid \rho^2\pm \gamma=\alpha\}|=2|\{\rho\mid\rho^2\pm \gamma=\beta\}|.
\end{equation}
Note that
\begin{equation*}
    |\{\rho\mid\rho^2\pm \gamma=\alpha\}|+|\{\rho\mid\rho^2\pm \gamma=-\alpha\}|=4
\end{equation*}
because exactly one of $\alpha+\gamma$ and $-\alpha-\gamma$ is a quadratic residue, and similarly for $\alpha-\gamma$ and $-\alpha+\gamma$ (if $\alpha=\pm\gamma$, the equation also holds since the solution to $\rho^2=0$ is counted twice). Since the sum in $\eqref{eq: stationary equation}$ is over pairs $\pm\alpha$, this implies \eqref{eq: stationary equation} holds for $\beta\neq \pm \gamma$. 

If $\beta=\pm\gamma$, the only difference is there is an extra term $|\{\rho\mid\rho^2=\pm \gamma\}|=2$ in the left hand side of \eqref{eq: stationary equation} corresponding to $\alpha=0$ which is not paired. But of course this corresponds to the solution to $\rho^2=0$ on the right hand side of \eqref{eq: stationary equation}.

\end{proof}

\begin{lemma}
\label{lem: ergodic}
The Markov chain defined in Theorem \ref{thm: sq and add} has a unique recurrent communicating class, and the restriction to this component is aperiodic if $\gamma=1$.
\end{lemma}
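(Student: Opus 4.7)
The plan is to prove the statement in two steps: uniqueness of the recurrent class, then aperiodicity when $\gamma=1$.

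\emph{Uniqueness.} Let $S=\{\beta^2+\gamma:\beta\in\mathbf{F}_p\}\cup\{\beta^2-\gamma:\beta\in\mathbf{F}_p\}\subseteq\mathbf{F}_p$, which is the support of the stationary distribution $\pi$ from Lemma~\ref{lem: stationary distr}. Two observations drive the argument. First, after one step the chain always lies in $S$ regardless of starting state, so every recurrent state lies in $S$. Second, $S$ is forward invariant, since for any $y\in S$ the two successors $y^2\pm\gamma$ are trivially of the form $(\cdot)^2\pm\gamma$ with $\beta=y$. Hence $S$ is a union of recurrent communicating classes, and it suffices to show the induced chain on $S$ is irreducible.

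The plan for irreducibility is to show that every $y\in S$ has a forward path to $0$. First, $0\in S$: since $p\equiv 3\pmod 4$ exactly one of $\pm\gamma$ is a quadratic residue, and any square root $\sigma$ of that one satisfies $\sigma^2\mp\gamma=0$, so $\sigma\to 0$ is a legal transition. The main step is showing that every state in $S$ can reach $0$; I would argue by contradiction, supposing there is a proper closed subset $C\subsetneq S$ omitting some orbit. Then $\pi|_C$ (suitably normalized) would be stationary for the chain restricted to $C$, and using the counting identity
\begin{equation*}
\sum_{\alpha^2\pm\gamma=\beta}|\{\rho:\rho^2\pm\gamma=\alpha\}|=2|\{\rho:\rho^2\pm\gamma=\beta\}|
\end{equation*}
from the proof of Lemma~\ref{lem: stationary distr}, together with the $p\equiv 3\pmod 4$ structure of squares (exactly one of $\alpha,-\alpha$ is a square), one should derive that $\pi|_C$ must coincide with $\pi$ up to scaling, forcing $C=S$.

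\emph{Aperiodicity when $\gamma=1$.} By uniqueness, the recurrent class $R$ contains $0$ (take $\sigma=1$), and the closed walk $0\to 1\to 0$ shows the period divides $2$. Suppose for contradiction that the period equals $2$, so there is a bipartition $R=R_0\sqcup R_1$ with $0\in R_0$ and every edge going between the two classes. Assigning labels from $0$ outward forces $\pm 1\in R_1$, $\{0,2\}\subseteq R_0$, $\{3,5\}\subseteq R_1$, $\{8,10,24,26\}\subseteq R_0$, and so on. Combined with the irreducibility proved above, this labeling propagates to every state of $S$, and an inspection of the forward orbit of $0$ should produce some state that is forced into both $R_0$ and $R_1$, contradicting bipartiteness.

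The main obstacle is the irreducibility step: one needs to rule out any proper closed subset of $S$, and this is where the $p\equiv 3\pmod 4$ hypothesis enters in an essential way through the quadratic-residue structure. The aperiodicity argument is then more routine once irreducibility is established, since the orbit of $0$ is explicit for small values.
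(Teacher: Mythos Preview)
Your setup is correct: $S=\supp\pi$ is forward invariant, contains all recurrent states, and it suffices to prove that the chain restricted to $S$ is irreducible. But the actual irreducibility argument you sketch does not go through. Suppose $C\subsetneq S$ is closed. You are right that $\pi|_C$ is then stationary for the restricted chain (since $C$ is forward invariant, the total $\pi$-mass entering $C$ from inside equals $\pi(C)$, forcing equality pointwise). However, nothing in the counting identity from Lemma~\ref{lem: stationary distr} lets you conclude $C=S$: that identity is exactly the statement that $\pi$ is stationary on all of $\mathbf{F}_p$, and it is perfectly compatible with $S$ splitting into several closed pieces each carrying its share of $\pi$. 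You have not used the $p\equiv 3\pmod 4$ hypothesis anywhere except to put $0$ into $S$, and without it the statement is false, so something essential is missing. The paper's argument is genuinely different: it introduces the auxiliary ``moves'' $\alpha\mapsto -\alpha$ and $\alpha\mapsto \alpha\pm 2\gamma$, shows (using the quadratic-residue dichotomy for $p\equiv 3\pmod 4$) that one can navigate between any two elements of $\supp\pi$ using these moves while staying in $\supp\pi$, and then separately proves that each such move keeps you inside a single communicating class of the original chain. This is where the real work is, and your proposal does not supply a substitute.

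The aperiodicity argument has a similar gap. The cycle $0\to 1\to 0$ correctly gives period dividing $2$, but your plan to propagate the bipartition labels along the forward orbit of $0$ and find a collision by ``inspection'' cannot work uniformly in $p$: the values $3,5,8,10,24,26,\ldots$ you list are residues modulo $p$, and there is no single finite computation that produces a parity conflict for every prime $p\equiv 3\pmod 4$. The paper instead argues indirectly: assuming all paths from $\alpha^2+1$ back to $\alpha$ have odd length, it manufactures even-length paths realizing the moves $\alpha\mapsto -\alpha$ and $\alpha\mapsto\alpha\pm 2$, and then invokes the connectivity established in the first part to build an even-length path from $\alpha^2+1$ to $\alpha$, a contradiction. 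So the aperiodicity proof leans on the irreducibility machinery, which is another reason the latter needs to be made explicit.
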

\begin{proof}
Any recurrent state must be of the form $\alpha^2\pm \gamma$, and so must be contained in $\supp\pi$. Conversely, since $\pi$ is a stationary distribution, every element of $\supp\pi$ is recurrent. It thus suffices to show that $\supp \pi$ contains a single communicating class. We do so as follows.

We claim for any $\alpha,\beta\in \supp\pi$, we can find a sequence of moves $\alpha\mapsto -\alpha$ and $\alpha\mapsto \alpha\pm 2\gamma$ taking us from $\alpha$ to $\beta$, while staying entirely inside $\supp\pi$. We'll then show that these moves stay within a single communicating class, from which we can conclude that $\supp\pi$ contains a single communicating class.

To see that we can go from $\alpha$ to $\beta$ using these moves while staying in $\supp\pi$, we argue as follows. At least one of $\alpha$ and $-\alpha$ lie in $\supp \pi$, since either $\alpha+\gamma$ or $-\alpha-\gamma$ is a quadratic residue as $p=3\pmod{4}$. Similarly, either $\alpha,\alpha+2\gamma\in\supp \pi$, or $-\alpha,-\alpha-2\gamma\in \supp \pi$, as one of $\alpha+\gamma$ and $-\alpha-\gamma$ will be a quadratic residue. A similar statement holds for $\alpha$ and $\alpha-2\gamma$. We thus partition $\mathbf{F}_p$ into sets $\{\pm \alpha\}$ and $\{0\}$, and note that each set contains an element of $\supp\pi$. Starting from $\alpha\in \supp\pi$, we can then move to an element in $\{\pm (\alpha+2\gamma)\}$ while staying inside $\supp\pi$, by either moving directly from $\alpha$ to $\alpha+2\gamma$ if it lies in $\supp\pi$, or moving from $\alpha$ to $-\alpha$, and then $-\alpha-2\gamma$ which are both guaranteed to lie in $\supp\pi$ if $\alpha+2\gamma$ does not. Similarly, we can move to an element in $\{\pm(\alpha-2\gamma)\}$. In this way, we can reach an element in any set of the form $\{\pm(\alpha+2k\gamma)\}$. But this exhausts $\mathbf{F}_p$, since $2\gamma$ is non-zero as $p\neq 2$, so we can reach either $\beta$ or $-\beta$. If we reach $-\beta$, then as $\beta$ by assumption also lies in $\supp\pi$, we can move from $-\beta$ to $\beta$.

Now we show the claims that if $\alpha,-\alpha\in \supp\pi$, then $\alpha$ and $-\alpha$ belong to the same communicating class, and that if $\alpha,\alpha+2\gamma\in \supp\pi$, then they belong to the same communicating class. 

Suppose that $\alpha$ and $-\alpha$ both lie in $\supp \pi$. Then as there is a path from $\alpha$ to $\alpha^2+\gamma$, and $\alpha\in \supp \pi$ so it must be recurrent, there must be a path from $\alpha^2+\gamma$ to $\alpha$. But then there is a path from $-\alpha$ to $\alpha^2+\gamma$, and then $\alpha$. Thus, $\alpha$ and $-\alpha$ must belong to the same communicating class.

Now suppose that $\alpha$ and $\alpha+2\gamma$ both lie in $\supp\pi$. If $\alpha+\gamma$ is a quadratic residue, then note that one of its square roots lies in $\supp\pi$ (it was already shown that one of $\alpha$ and $-\alpha$ lie in $\supp\pi$ for any $\alpha$). This square root must then be recurrent, and so from $\alpha$, we must be able to reach it, after which we can move to $\alpha+2\gamma$. If $\alpha+\gamma$ is not a quadratic residue, then $-\alpha-\gamma$ is a quadratice residue, and so both $-\alpha$ and $-\alpha-2\gamma$ lie in $\supp\pi$ and by the same argument lie in the same communicating class. We can then use the fact that $-\alpha$ and $\alpha$ lie in the same communicating class, as well as $-\alpha-2\gamma$ and $\alpha+2\gamma$, to conclude that $\alpha$ and $\alpha+2\gamma$ lie in the same communicating class.

To see that the random walk is aperiodic if $\gamma=1$, first note that there is a cycle of length $2$, namely $0\to 1\to 0$, and so it suffices to find an odd cycle. Now if a path from $\alpha^2+1$ to $\alpha$ were of even length, we would immediately be done because then after taking the step $\alpha\to \alpha^2+1$, we would obtain an odd cycle, so assume otherwise. That is, assume for all $\alpha$ that there is are only paths of odd length from $\alpha^2+1$ to $\alpha$. Then moving from $-\alpha$ to $\alpha^2+\gamma$, and then back to $\alpha$, is a path of even length. If $\alpha+1$ and is a quadratic residue, we obtain an odd length path going from $\alpha+2$ to $\sqrt{\alpha+1}$ and then moving to $\alpha$ gives an even length path from $\alpha+2$ to $\alpha$. If $-\alpha-1$ were a quadratic residue, we would obtain an even length path from $-\alpha-2$ to $-\alpha$. We can similarly assume paths from $\alpha^2-1$ to $\alpha$ are odd length, and obtain even length paths from either $\alpha-2$ to $\alpha$ or $-\alpha+2$ to $-\alpha$. But then using these we can construct an even length path from $\alpha^2+1$ back to $\alpha$, since we proved above that we can move from any element in $\supp\pi$ to any other using these moves. Thus, there must be an odd cycle.
\end{proof}

Theorem \ref{thm: sq and add} follows immediately from Lemmas \ref{lem: stationary distr} and \ref{lem: ergodic}.

\begin{example}
Consider the case $p=11$ and $\gamma=1$. Then the stationary distribution is given by
\begin{equation*}
    \pi=\left(\frac{2}{22},\frac{1}{22},\frac{4}{22},\frac{2}{22},\frac{4}{22},\frac{2}{22},\frac{2}{22},\frac{0}{22},\frac{2}{22},\frac{0}{22},\frac{3}{22}\right),
\end{equation*}
and it can be checked that the numerators are indeed given by counting solutions to $\beta^2\pm 1=\alpha$.
\end{example}

\subsection{The case \texorpdfstring{$p=1\pmod 4$}{p=1 mod 4}}
\label{sec: p=1 mod 4}
The case when $p=1\pmod 4$ is more wild, but there is some evidence to suggest that much of the behavior is similar to that of a random directed graph with certain degree constraints.

The following heuristic was suggested by Alex Cowan (private communication). For convenience, we work with the graph whose edges are given by $(\alpha, \beta)$ for $\beta=(\alpha\pm 1)^2$. Let $\pi'$ denote the stationary distribution of this walk. It's clear that the stationary distribution of the original walk can be recovered from the new one by $\pi(x)=\pi'(x-1)/2+\pi'(x+1)/2$.

The idea is that the degree distribution of the graph is determined, with about half the vertices having indegree $0$ and half having indegree $4$ (we just ignore $0$, $1$, and $-1$). Now the key observation is that the vertices with indegree $0$ come in pairs, as if $\alpha$ is not a quadratic residue, then neither is $-\alpha$. This means after removing all vertices with indegree $0$, if the edges were random subject to this constraint then we have $1/4$ of the vertices have indegree $0$, $1/2$ have indegree $2$ and $1/4$ have indegree $4$. If we assume that the resulting graph is random, then by applying work of Cooper and Frieze \cite{CF04}, we can obtain a precise conjecture for the size of the support.

\begin{conjecture}
Let $\varepsilon_n$ be uniformly distributed on $\pm 1$ and independent. Consider the Markov chain defined by $X_{n+1}=X_n^2+\varepsilon_{n+1}$ on $\mathbf{F}_p$ with $p=1\pmod{4}$. Then conjecturally it has a unique stationary distribution $\pi$, and
\begin{equation*}
    \frac{|\supp \pi|}{p}\to 1-\frac{1}{4}(1+\alpha)^2,
\end{equation*}
where $\alpha\approx 0.2956$ is the smallest positive root of $x^4+2x^2-4x+1$.
\end{conjecture}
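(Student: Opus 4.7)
The plan is to make the Cooper--Frieze heuristic in the excerpt rigorous by supplementing it with character-sum estimates. Pass to the auxiliary digraph $G$ on $\mathbf{F}_p$ with edges $(\alpha,(\alpha\pm 1)^2)$: every vertex has outdegree $2$, and a vertex $\beta$ has indegree $4$ if $\beta$ is a nonzero square and $0$ otherwise (with trivial corrections at $\beta \in \{0,1,-1\}$). In particular $\supp\pi'$ lies in the set $Q$ of quadratic residues. On the induced subgraph $G|_Q$, the four preimages $\pm\sqrt{\beta}\pm 1$ of a vertex $\beta \in Q$ partition into pairs $\{\sqrt{\beta}+1,-\sqrt{\beta}-1\}$ and $\{\sqrt{\beta}-1,-\sqrt{\beta}+1\}$; since $-1\in Q$ for $p\equiv 1\pmod 4$, each pair is QR-homogeneous, so the induced indegree is $0$, $2$, or $4$.

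The first technical step is to verify that this indegree distribution is asymptotically $(1/4,1/2,1/4)$. Writing $\beta=t^2$ and letting $\chi$ be the Legendre character, the number of $\beta\in Q$ with indegree $4$ is essentially $\tfrac{1}{4}\sum_{t}(1+\chi(t+1))(1+\chi(t-1))$, with analogous character sums for the other indegree counts; each is controlled by the Weil bound and yields the stated proportions with $O(p^{-1/2})$ error.

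The central step is an application of random-digraph theory along the lines of Cooper--Frieze \cite{CF04}. Treating $G|_Q$ as a random digraph with outdegree generating function $x^2$ and indegree generating function $\phi(x) = \tfrac{1}{4}(1+x^2)^2$, the backward branching process has extinction probability equal to the fixed point of $\phi$ in $(0,1)$, which is precisely the smallest positive root $\alpha$ of $x^4+2x^2-4x+1=0$; the forward process is the deterministic binary tree and never goes extinct. Consequently the giant terminal SCC, which one expects to coincide with $\supp\pi'$, has density $1-\alpha$ inside $Q$, and thus density $(1-\alpha)/2$ inside $\mathbf{F}_p$. Translating back through $\supp\pi = (\supp\pi' - 1)\cup(\supp\pi'+1)$ together with approximate independence of the two shifts gives the complementary density $\bigl((1+\alpha)/2\bigr)^2$, i.e.\ $|\supp\pi|/p \to 1 - \tfrac{1}{4}(1+\alpha)^2$.

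The main obstacle is justifying the random-digraph analogy. The graph $G|_Q$ is algebraically rigid rather than a configuration model, and Cooper--Frieze-type results presuppose short-cycle sparsity, joint degree regularity, and effective edge independence; one would have to establish each of these through character-sum bounds on iterated preimages of $x\mapsto(x\pm 1)^2$, including counts of short cycles and near-cycles of the resulting correspondence on $Q$. The second pseudorandomness claim, that the shifts $\supp\pi'\pm 1$ intersect in density $\bigl((1-\alpha)/2\bigr)^2$, is of the same flavor and seems no easier. As in related work on iterations of algebraic maps (cf.\ \cite{KLMMSS16}), this is a substantially deeper task than the one-step degree computation above, and I expect it to be the principal barrier to converting the heuristic into a theorem.
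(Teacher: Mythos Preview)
The statement you are attempting is a \emph{conjecture}, and the paper does not prove it. What the paper offers is precisely the Cooper--Frieze heuristic you reproduce: pass to the auxiliary digraph with edges $(\alpha,(\alpha\pm1)^2)$, observe that after deleting the non-residues the indegree distribution is $(1/4,1/2,1/4)$, and then pretend the remaining graph is a uniformly random digraph with that degree profile so that the Cooper--Frieze machinery applies. Your proposal is a faithful and somewhat more detailed rendering of that same heuristic --- you add the character-sum verification of the degree distribution, the explicit generating-function derivation of the quartic $x^4+2x^2-4x+1$, and the inclusion--exclusion step relating $\supp\pi'$ to $\supp\pi$ --- but it is not a different approach, and it is not a proof.

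You correctly identify the genuine gap: the induced graph $G|_Q$ is a deterministic algebraic object, not a configuration model, and nothing in the paper or in your proposal shows that Cooper--Frieze conclusions transfer to it. Establishing the requisite pseudorandomness (short-cycle sparsity, joint independence of iterated preimages under $x\mapsto(x\pm1)^2$, and the claimed approximate independence of the shifts $\supp\pi'\pm1$) would require control of character sums along orbits of an algebraic correspondence, which is well beyond the one-step Weil bound you invoke for the degree count. This is exactly why the paper states the result as a conjecture supported by numerics rather than as a theorem; your write-up is an honest assessment of where a proof would have to go, but it does not close the gap.
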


This conjecture agrees with computer computations done by Steve Butler, which suggests that asymptotically the support of $\pi$ contains $58\%$ of the elements in $\mathbf{F}_p$.

\section{Counting solutions with exponential sum bounds}
\label{sec: exp sum bounds}
To obtain lower bounds on the Cheeger constant, after a reduction we will need to show that $f(x)\pm \gamma=y$ has many solutions for $x\in S$ and $y\in S'$, where $S$ and $S'$ are a disjoint union of arithmetic progressions. This will be shown following the argument of Browning--Haynes \cite{BH13}, who actually proved the needed result in the case of $f(x)=x^{-1}$ (the sum was restricted to $\mathbf{F}_p^\times$ but this is unimportant). We give the modifications needed to generalize to the class of functions $\mathcal{B}(p,d)$ as well as the function $f(x)=x^2$. The key is the Weil bound for exponential sums. We use the following convenient formulation given in \cite[Lemma 3]{GS10}.

\begin{lemma}
\label{lem: weil bound}
Let $P,Q\in\mathbf{F}_p[x]$ be coprime polynomials such that $P(x)/Q(x)$ is not a constant function on $\mathbf{F}_p$. Then
\begin{equation*}
    \left|\sum _{\substack{x\in\mathbf{F}_p\\Q(x)\neq 0}}e_p(P(x)/Q(x))\right|\leq Cp^{1/2},
\end{equation*}
where the constant $C$ depends only on the degree of $P$ and $Q$.
\end{lemma}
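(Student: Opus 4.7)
The plan is to apply the Hasse--Weil bound (the Riemann hypothesis for smooth projective curves over $\mathbf{F}_p$) to the Artin--Schreier cover associated to $f = P/Q$. Let $U \subset \mathbf{F}_p$ denote the set where $Q \neq 0$, and let $C$ be the affine curve defined by $y^p - y = f(x)$ over $U$; this is a Galois étale cover of $U$ with group $(\mathbf{F}_p, +)$.

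Character orthogonality gives $|\{y \in \mathbf{F}_p : y^p - y = f(x)\}| = \sum_{t \in \mathbf{F}_p} e_p(t f(x))$ for each $x \in U$, so summing over $x$ expresses the total $\mathbf{F}_p$-point count of the affine cover $C$ as $\sum_{t \in \mathbf{F}_p} \sum_{x \in U} e_p(t f(x))$. Isolating the $t=1$ contribution amounts to computing the trace of Frobenius on the $e_p$-isotypic component of the compactly supported $\ell$-adic cohomology of the Artin--Schreier sheaf on $U$. To bound it, I would compactify $C$ to a smooth projective curve $\overline{C}$ and apply Riemann--Hurwitz: Artin--Schreier covers of the projective line ramify only above poles of $f$, and the local contribution at each such pole is controlled by the order of the pole via the Artin conductor, giving $g(\overline{C}) = O(d)$. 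The Hasse--Weil estimate $\bigl||\overline{C}(\mathbf{F}_p)| - (p+1)\bigr| \leq 2 g(\overline{C})\sqrt{p}$ then yields $|S| \leq C p^{1/2}$ after subtracting the $O_d(1)$ contributions from points at infinity and above $\{Q = 0\}$.

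The main obstacle is establishing geometric irreducibility of $\overline{C}$: the Hasse--Weil bound requires this, and it fails precisely when $f = h^p - h + c$ for some $h \in \overline{\mathbf{F}_p}(x)$ and $c \in \overline{\mathbf{F}_p}$. If such $h$ existed with $h \in \mathbf{F}_p(x)$, then $f|_{\mathbf{F}_p} \equiv c$ (since $h(x)^p = h(x)$ for $x \in \mathbf{F}_p$ at every regular point), contradicting the non-constancy hypothesis; the descent from $\overline{\mathbf{F}_p}$ down to $\mathbf{F}_p$ is handled by averaging $h$ over its Galois conjugates and comparing coefficient fields. With irreducibility in hand, the plan above produces the desired bound with a constant depending only on $\deg P$ and $\deg Q$.
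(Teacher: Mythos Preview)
The paper does not prove this lemma at all: it is quoted verbatim from \cite[Lemma~3]{GS10} and used as a black box. So there is no ``paper's own proof'' to compare against; your sketch is an attempt to supply what the paper deliberately outsources.

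Your overall strategy---Artin--Schreier cover, Weil/Deligne for the Frobenius eigenvalues, irreducibility via the $h^p-h$ criterion---is the standard and correct one, and your handling of geometric irreducibility (including the descent of $h$ to $\mathbf{F}_p(x)$ up to a constant) is fine. However, there is a genuine gap in the middle of the argument. The genus of the full Artin--Schreier curve $\overline{C}:y^p-y=f(x)$ is \emph{not} $O(d)$: Riemann--Hurwitz gives a different contribution of order $(p-1)(m+1)$ at a pole of order $m$, so $g(\overline{C})$ is of order $dp$, not $d$. Consequently the global Hasse--Weil inequality $\bigl||\overline{C}(\mathbf{F}_p)|-(p+1)\bigr|\le 2g(\overline{C})\sqrt{p}$ only bounds $\sum_{t\ne 0}S_t$ by something of size $dp^{3/2}$, which says nothing about an individual $S_1$.

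The repair is exactly the object you already named but then abandoned: work with the $\mathbf{F}_p$-equivariant decomposition of $H^1(\overline{C})$ (equivalently, with the rank-one Artin--Schreier sheaf $\mathcal{L}_{\psi}(f)$ on $U$). Each nontrivial isotypic piece has dimension bounded by the Artin/Swan conductor, which is $O(d)$ and independent of $p$; the Weil bound on Frobenius eigenvalues then gives $|S_1|\le (\dim V_{\psi})\sqrt{p}=O_d(\sqrt{p})$. Summing these dimensions over all $p-1$ nontrivial characters recovers the large genus you computed, so the two pictures are consistent---you just cannot read off the bound for a single character from the total point count.
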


We now give bounds for averaged exponential sums over intervals. A separate elementary argument is given when $f(x)=x^2$.
\begin{lemma}
\label{lem: complete estimate rational}
Let $f\in\mathcal{B}(p,d)$. Then for any interval $I$, and any $k\in\mathbf{F}_p^\times$,
\begin{equation*}
    \sum_{n=1}^p\left|\sum_{x\in I}e_p(kf(x+n)) \right|^2\leq C p|I|,
\end{equation*}
where the constant $C$ depends only on $d$.
\end{lemma}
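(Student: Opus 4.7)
The plan is to open the square, view the resulting expression as the $\ell^2$-norm of a discrete convolution, and apply Plancherel together with Lemma \ref{lem: weil bound}. Set $a(t)=e_p(kf(t))$ for $t\in\mathbf{F}_p$ and let $\chi$ be the indicator of the interval $I$. Then $\sum_{x\in I}e_p(kf(x+n))=\sum_t a(t)\chi(t-n)$ is the value at $n$ of a convolution of $a$ and $\chi$, so Plancherel's identity yields
\begin{equation*}
    \sum_{n=1}^p\left|\sum_{x\in I}e_p(kf(x+n)) \right|^2 = \frac{1}{p}\sum_{\xi\in\mathbf{F}_p}|\hat a(\xi)|^2|\hat \chi(\xi)|^2,
\end{equation*}
where $\hat h(\xi)=\sum_t h(t)e_p(-\xi t)$.

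The key input is a uniform bound $|\hat a(\xi)|\leq Cp^{1/2}$ for every $\xi\in\mathbf{F}_p$, which is exactly Lemma \ref{lem: weil bound} applied to the rational function $kf(t)-\xi t$. Writing $f=P/Q$ in lowest terms with $\deg P,\deg Q\leq d$, one has $kf(t)-\xi t=(kP(t)-\xi tQ(t))/Q(t)$, whose numerator and denominator remain coprime (since $\gcd(P,Q)=1$ and $k\neq 0$) and of degree at most $d+1$. The remaining hypothesis to check is non-constancy as a function on $\mathbf{F}_p$: if $kf(t)-\xi t\equiv c$ off the zeros of $Q$, the polynomial $kP(t)-(\xi t+c)Q(t)$, of degree at most $d+1$, would vanish at $p-\deg Q$ points, and for $p$ larger than an absolute constant depending on $d$ this forces the polynomial identity $kP=(\xi t+c)Q$; coprimality of $P$ and $Q$ then makes $Q$ a unit and $f$ linear, contradicting $f\in\mathcal{B}(p,d)$.

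Combining the Weil bound with Parseval's identity $\sum_\xi|\hat\chi(\xi)|^2=p|I|$ gives
\begin{equation*}
    \frac{1}{p}\sum_\xi|\hat a(\xi)|^2|\hat\chi(\xi)|^2 \leq \frac{C^2p}{p}\cdot p|I| = C^2p|I|,
\end{equation*}
as required. The main point of care, and the only real obstacle, is verifying the Weil hypothesis uniformly in $\xi$, which is where the non-linearity of $f$ enters. I note that the more naive approach of expanding the square directly and applying Weil pairwise to $\sum_n e_p(k[f(n+u)-f(n)])$ for each nonzero $u=x-y$ yields only $p|I|+C|I|^2p^{1/2}$, which is weaker than the stated bound once $|I|>p^{1/2}$; the Plancherel route avoids this loss and handles all $I$ uniformly.
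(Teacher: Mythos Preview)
Your proof is correct and, at its core, coincides with the paper's: both arrive at the Plancherel identity
\[
\sum_{n}\Bigl|\sum_{x\in I}e_p(kf(x+n))\Bigr|^2=\frac{1}{p}\sum_{\xi}\Bigl|\sum_{a}e_p(kf(a)-\xi a)\Bigr|^2\Bigl|\sum_{x\in I}e_p(\xi x)\Bigr|^2
\]
and then insert the Weil bound for the middle factor. The paper reaches this identity by hand---expanding the square, inserting a dual average over~$\alpha$, and substituting $a=f(x+n)$ using the bijectivity of~$f$---whereas you recognise the inner sum as a convolution and invoke Plancherel directly; your route is cleaner and in fact does not use that $f$ is a bijection. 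For the final estimate the paper bounds $|\widehat\chi(\xi)|\le\min(|I|,p/2|\xi|)$ and sums, while you use Parseval $\sum_\xi|\widehat\chi(\xi)|^2=p|I|$; again your version is slightly slicker. One small point you should make explicit (as the paper does) is that $f$ differs from the rational function $P/Q$ at the at most $d$ poles of $Q$, so applying Lemma~\ref{lem: weil bound} to $\widehat a(\xi)$ incurs an $O(d)$ error which is absorbed into $C$.
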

\begin{proof}
We have
\begin{equation*}
\begin{split}
    &\sum_{n=1}^p\left|\sum_{x\in I}e_p(kf(x+n)) \right|^2
    \\=&\sum_{x,y\in I}\sum_{n=1}^p e_p(k(f(x+n)-f(y+n)))
    \\=&\sum_{x,y\in I}\sum_{\alpha=1}^p\sum_{a,b=1}^p p^{-1} e_p(k(a-b))e_p(\alpha(f^{-1}(a)-f^{-1}(b)-x+y))
    \\=&p^{-1}\sum _{\alpha=1}^p\left|\sum_{a=1}^p e_p(\alpha a+k f(a))\right|^2\left|\sum_{x\in I}e_p(\alpha x)\right|^2,
\end{split}
\end{equation*}
where the second equality follows from the fact that the sum over $\alpha$ enforces the condition $f^{-1}(a)-f^{-1}(b)=x-y$ (taking $n=f^{-1}(a)-x=f^{-1}(b)-y$, this is equivalent to $a=f(x+n)$ and $b=(y+n)$) and the third equality uses that $f$ is bijective. Now $\alpha a+kf(a)$ is not constant as a function of $a$, because $f(a)$ is not linear. Then
\begin{equation*}
    \left|\sum_{a=1}^p e_p(\alpha a+k f(a))\right|\leq Cp^{1/2}
\end{equation*}
for a constant $C$ depending only on $d$, by Lemma \ref{lem: weil bound}. Since there are a bounded number of poles for $f$, the additional terms introduced by extending to $\mathbf{F}_p$ are negligible up to changing $C$. 

Note that for $\alpha\in [-p/2,p/2]$
\begin{equation*}
    \
    \left|\sum_{x\in I}e_p(\alpha x)\right|\leq \min(|I|,p/2|\alpha|).
\end{equation*}
Thus,
\begin{equation*}
\begin{split}
    p^{-1}\sum _{\alpha=1}^p\left|\sum_{a=1}^p e_p(\alpha a+k f(a))\right|^2\left|\sum_{x\in I}e_p(\alpha x)\right|^2&\leq p^{-1}C\sum_{|\alpha|\leq p/|I|} p|I|^2+p^{-1}C\sum_{|\alpha|>p/|I|}\frac{p^3}{\alpha^2}
    \\&\leq Cp|I|,
\end{split}
\end{equation*}
using the bound
\begin{equation*}
    \sum_{\alpha=n}^\infty \frac{1}{\alpha^2}\leq \frac{1}{n-1}.
\end{equation*}
\end{proof}

\begin{lemma}
\label{lem: complete estimate square}
For all $k\in\mathbf{F}_p^\times$, we have
\begin{equation*}
    \sum_{n=1}^p\left|\sum_{x\in I}e_p(k(x+n)^2) \right|^2=p|I|.
\end{equation*}
\end{lemma}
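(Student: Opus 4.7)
The plan is to expand the square, interchange summations, and exploit an algebraic feature of squaring that is not available for a general rational function. Writing
\begin{equation*}
\sum_{n=1}^p\left|\sum_{x\in I}e_p(k(x+n)^2)\right|^2 = \sum_{x,y\in I}\sum_{n=1}^p e_p(k((x+n)^2-(y+n)^2)),
\end{equation*}
the crucial observation is the identity
\begin{equation*}
(x+n)^2-(y+n)^2 = (x-y)(x+y) + 2n(x-y),
\end{equation*}
which is \emph{linear} in $n$. This linearity is what allows us to evaluate the sum exactly, rather than merely bound it via Weil as in Lemma~\ref{lem: complete estimate rational}; it is also why no implicit constant appears in the statement.

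With this identity in hand, the inner sum factors as
\begin{equation*}
e_p(k(x-y)(x+y))\sum_{n=1}^p e_p(2k(x-y)n).
\end{equation*}
By orthogonality of additive characters on $\mathbf{F}_p$, the sum over $n$ equals $p$ when $2k(x-y)\equiv 0\pmod p$ and vanishes otherwise. Since $p$ is odd and $k\in\mathbf{F}_p^\times$, this forces $x=y$, in which case the character prefactor $e_p(k(x-y)(x+y))$ is $1$. Collecting the contribution $p$ over the $|I|$ diagonal pairs $(x,x)$ with $x\in I$ gives the claimed total $p|I|$.

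There is no substantive obstacle: once one notices the linearity in $n$ of the difference of two squares, the lemma reduces to a one-line orthogonality computation. The only care needed is to note that $2$ is invertible mod $p$ (so that $2k(x-y)\equiv 0$ really does force $x=y$), which is immediate since $p$ is odd.
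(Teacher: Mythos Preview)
Your argument is correct and follows exactly the same route as the paper: expand the square, use that $(x+n)^2-(y+n)^2$ is linear in $n$, and apply orthogonality of characters so that only the diagonal $x=y$ contributes. The paper's proof is terser (it writes the exponent directly as $2k(x-y)n+k(x^2-y^2)$ and jumps to $p|I|$), but the content is identical.
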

\begin{proof}
Expanding the square, we have
\begin{equation*}
\begin{split}
    \sum_{n=1}^p\left|\sum_{x\in I}e_p(k(x+n)^2) \right|^2&=\sum_{x,y\in I}\sum_{n=1}^p e_p(2k(x-y)n+k(x^2-y^2))
    \\&=p|I|.
\end{split}
\end{equation*}
\end{proof}

These averaged estimates feed into an argument originally due to Heath-Brown \cite{HB13}, to give the following bound. We include the proof for completeness.

\begin{proposition}
\label{prop: complete to incomplete}
Let $I_j$ be a collection of $J$ disjoint arithmetic progressions of length $L$, with common difference $\delta\in\mathbf{F}_p^\times$. Suppose that $f\in \mathcal{B}(p,d)$ or $f(x)=x^2$. Then for all $k\in\mathbf{F}_p^\times$,
\begin{equation*}
    \sum_{j=1}^J\left|\sum_{x\in I_j} e_p(kf(x))\right|^2\leq Cp\log^2(L+1),
\end{equation*}
where the constant depends only on $d$.
\end{proposition}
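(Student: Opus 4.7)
The plan is to apply the Heath--Brown transfer technique of \cite{HB13,BH13}, which converts the averaged complete-sum estimates of Lemmas~\ref{lem: complete estimate rational} and~\ref{lem: complete estimate square} into an incomplete-sum bound for disjoint short progressions.

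First I would reduce to the case $\delta = 1$ by substituting $x = a_j + \delta y$ in the $j$-th sum; this replaces $f$ by $\tilde f(y) = f(\delta y)$, a function of the same type (rational of bounded degree, or quadratic after absorbing $\delta^2$ into the multiplier $k$), so the averaged complete-sum estimate still applies. The disjoint APs $I_j$ become disjoint intervals $[b_j, b_j + L - 1]$ in $\mathbf{F}_p$, and setting $T(b, L) = \sum_{x=b}^{b + L - 1} e_p(k \tilde f(x))$, it suffices to bound $\sum_{j=1}^J |T(b_j, L)|^2$.

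The central step is the short-shift averaging. For any $1 \leq H \leq L$, shifting the window by $h$ alters at most $2h$ of its unit-modulus summands, so $T(b_j, L) = \frac{1}{H}\sum_{h=0}^{H-1} T(b_j + h, L) + O(H)$. Squaring and applying Cauchy--Schwarz yields
\[
|T_j|^2 \leq \frac{2}{H}\sum_{h=0}^{H-1} |T(b_j + h, L)|^2 + O(H^2).
\]
Because the intervals $[b_j, b_j + L - 1]$ are disjoint and $H \leq L$, the shifted starting points $\{b_j + h\}_{j, h}$ are pairwise distinct in $\mathbf{F}_p$, so summing over $j$ and invoking the averaged estimate $\sum_n |T(n, L)|^2 \leq C p L$ gives
\[
\sum_{j=1}^J |T_j|^2 \leq \frac{2 C p L}{H} + O(J H^2).
\]

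The main obstacle is extracting the precise $\log^2(L + 1)$ factor: a naive optimization in $H$ only yields $(p L)^{2/3} J^{1/3}$, not the claimed $p \log^2(L+1)$. The logarithmic improvement is obtained by combining the averaging identity with the pointwise Weil--Fourier bound $|T(b, L)| \ll p^{1/2} \log(L+1)$, which comes from the Fourier expansion $T(b, L) = p^{-1} \sum_\alpha G(\alpha) S_f(\alpha) e_p(-\alpha b)$, with $G(\alpha) = \sum_{y=0}^{L-1} e_p(-\alpha y)$ and $S_f(\alpha) = \sum_x e_p(\alpha x + k \tilde f(x))$, combined with Lemma~\ref{lem: weil bound}'s estimate $|S_f(\alpha)| \leq C p^{1/2}$ and the harmonic-tail bound $\sum_\alpha |G(\alpha)| \ll p \log L$. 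Inserting this pointwise bound into the squaring step gives the refined inequality $|T_j|^2 \leq 2|T(b_j+h,L)|^2 + O(h p^{1/2} \log L + h^2)$, whose averaged version has an error linear rather than quadratic in $H$; balancing with the optimal choice of $H$ and using $JL \leq p$ from disjointness to absorb the remaining $J$-dependence yields the stated bound. The principal difficulty is the careful bookkeeping of the several logarithmic contributions (from the harmonic tail of $G$ and from the optimization in $H$) while preserving the disjointness-based cancellation at the Cauchy--Schwarz step.
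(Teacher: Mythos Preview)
Your reduction to $\delta=1$ is fine and matches the paper, but the core mechanism you describe is \emph{not} the Heath--Brown transfer, and the optimization you sketch does not close.  After your refined squaring step you have
\[
\sum_{j}|T_j|^2 \;\le\; \frac{CpL}{H} \;+\; CJH\,p^{1/2}\log L \;+\; CJH^2,
\]
valid for $H\le L$.  To make the first term at most $Cp\log^2 L$ you must take $H\ge L/\log^2 L$; but then, using only $JL\le p$, the middle term is at least $JL\,p^{1/2}/\log L$, which in the extremal regime $JL\sim p$ is of order $p^{3/2}/\log L\gg p\log^2 L$.  Replacing the trivial bound $|E_h|\le 2h$ by the Weil pointwise bound $|E_h|\ll p^{1/2}\log h$ yields instead an error $CJp\log^2 H$, which again carries an unavoidable factor of $J$.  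So no choice of $H$ gives $Cp\log^2(L+1)$ uniformly in $J$.

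The missing idea is that Heath--Brown's argument does \emph{not} shift a window of fixed length $L$.  One first dominates $|S(x_j,L)|$ by $2\max_{l\le 2L}|S(n,l)|$ for every $n$ in the length-$L$ window to the left of $x_j$, averages over that full window (so the denominator is $L$, not a free parameter $H$), applies Cauchy--Schwarz, and sums over $j$ using disjointness to reach
\[
\sum_j |S(x_j,L)|^2 \;\le\; \frac{4}{L}\sum_{n\in\mathbf{F}_p}\max_{l\le 2L}|S(n,l)|^2.
\]
The maximum over $l$ is then handled by writing the maximizing $l^*=l^*(n)$ in binary as a sum of $O(\log L)$ dyadic blocks, so that $S(n,l^*)$ decomposes into $O(\log L)$ sums over intervals of dyadic lengths $2^{t-s}$; Cauchy--Schwarz on this decomposition contributes one $\log L$, and summing the complete estimates of Lemmas~\ref{lem: complete estimate rational}--\ref{lem: complete estimate square} over the dyadic scales contributes the second.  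It is precisely this ``max over lengths plus dyadic splitting'' step, absent from your outline, that eliminates the $J$-dependence and produces the $\log^2(L+1)$.
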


\begin{proof}
By replacing $f(x)$ with $f(\delta x)$ (or in the case of $f(x)=x^2$, absorbing $\delta^2$ into $k$), we may work with intervals $I_j$ instead of arithmetic progressions. The proof then proceeds exactly as in the proof of Theorem 2 in \cite{BH13} (this argument is originally due to Heath-Brown \cite{HB13}).

Let $S(n,h)=\sum_{x=n+1}^{n+h}e_p(kf(x))$. Let $x_j$ denote the first element in the interval $I_j$. Then the sum we are interested in is simply
\begin{equation*}
    \sum_{j=1}^J |S(x_j,L)|^2.
\end{equation*}
Now for any $1\leq l\leq L$ and $x_j-L<n\leq x_j$, we have
\begin{equation*}
    |S(x_j,l)|=|S(n, x_j-n+l)-S(n, x_j-n)|\leq 2\max _{l\leq 2L}|S(n,l)|,
\end{equation*}
and so
\begin{equation*}
    |S(x_j, L)|\leq \frac{2}{L}\sum_{x_j-L<n\leq x_j}\max_{l\leq 2L}|S(n,l)|.
\end{equation*}
Then by Cauchy's inequality,
\begin{equation*}
    |S(x_j, L)|^2\leq \frac{4}{L}\sum_{x_j-L<n\leq x_j}\max_{l\leq 2L}|S(n,l)|^2,
\end{equation*}
and summing over $j$ gives
\begin{equation*}
    \sum_{j=1}^J |S(x_j,L)|^2\leq \frac{4}{L}\sum_{n\in\mathbf{F}_p}\max_{l\leq 2L}|S(n,l)|^2,
\end{equation*}
where we use the fact that the intervals are disjoint. Now pick $t$ an integer so that $2L\leq 2^t\leq 4L$, and for each $n\in\mathbf{F}_p$, pick $l^*$ depending on $n$ so that $|S(n,l^*)|=\max_{l\leq 2L}|S(n,l)|$. Then writing the binary expansion $l^*=\sum _{s\in D}2^{t-s}$, we have
\begin{equation*}
    S(n,l^*)=\sum _{s\in D}S(n+v_{n,s}2^{t-s}, 2^{t-s}),
\end{equation*}
where
\begin{equation*}
    v_{n,s}=\sum_{u\in D, u<s}2^{s-u}<2^s.
\end{equation*}
Then we have
\begin{equation*}
\begin{split}
    |S(n,l^*)|^2&\leq |D|\sum_{s\in D}|S(n+v_{n,s}2^{t-s},2^{t-s})|^2
    \\&\leq (t+1)\sum_{0\leq  s\leq t}\sum_{0\leq v<2^s}|S(n+v2^{t-s},2^{t-s})|^2.
\end{split}
\end{equation*}
Finally, we can sum over $n$ and use Lemma \ref{lem: complete estimate rational} or Lemma \ref{lem: complete estimate square} to obtain
\begin{equation*}
\begin{split}
    \frac{4}{L}\sum_{n\in\mathbf{F}_p}\max_{l\leq 2L}|S(n,l)|^2&\leq \frac{Cp(t+1)}{L}\sum_{0\leq  s\leq t}\sum_{0\leq v<2^s}2^{t-s}
    \\&\leq \frac{Cp(t+1)^22^t}{L}
    \\&\leq Cp\log^2(L+1).
\end{split}
\end{equation*}

\end{proof}

We now prove a bound for linear exponential sums over a union of intervals.

\begin{proposition}
\label{prop: lin exp bound}
Let $I_j$ be a collection of $J$ disjoint arithmetic progressions of length $L$, with common difference $\delta\in\mathbf{F}_p^\times$. Then
\begin{equation*}
    \sum_{k=1}^{p-1}\left|\sum_{j=1}^J \sum_{x\in I_j}e_p(kx)\right|\leq CJ^{1/2}p\log^{3/2}(p),
\end{equation*}
for some constant $C>0$.
\end{proposition}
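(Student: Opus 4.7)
The plan is to exploit the clean factorization
\begin{equation*}
    T(k) := \sum_{j=1}^J\sum_{x\in I_j}e_p(kx) = G(k\delta)F(k),
\end{equation*}
where $b_j$ denotes the starting element of $I_j$, $G(u) = \sum_{m=0}^{L-1}e_p(um)$ is the usual geometric sum satisfying $|G(u)|\leq\min(L, p/(2|u|))$ (with $|u|$ the distance to the nearest multiple of $p$), and $F(k) = \sum_j e_p(kb_j)$. I would then apply Cauchy--Schwarz with weight $|G(k\delta)|^{1/2}$:
\begin{equation*}
    \sum_{k=1}^{p-1}|T(k)| \leq \left(\sum_{k\neq 0}|G(k\delta)|\right)^{1/2}\left(\sum_{k\neq 0}|G(k\delta)||F(k)|^2\right)^{1/2}.
\end{equation*}

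Standard estimates on geometric sums give $\sum_{k\neq 0}|G(k\delta)| = \sum_{u\neq 0}|G(u)| = O(p\log p)$, handling the first factor. For the second factor, expand $|F(k)|^2 = \sum_{j,j'}e_p(k(b_j-b_{j'}))$ and separate diagonal and off-diagonal terms. The diagonal contributes $J\sum_{k\neq 0}|G(k\delta)| = O(Jp\log p)$. The off-diagonal terms reduce to $\sum_{j\neq j'}Z(b_j-b_{j'})$, where, after substituting $u = k\delta$ and setting $s = \delta^{-1}t$,
\begin{equation*}
    Z(t) = \sum_{k\neq 0}|G(k\delta)|e_p(kt) = \sum_{u\neq 0}|G(u)|e_p(us).
\end{equation*}
The crux is the estimate $|Z(t)|\leq CpL\log L/|s|$ for $s\neq 0$.

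To prove this $Z$-bound, I would use Abel summation. By the symmetry $|G(-u)|=|G(u)|$, it suffices to bound the half-sum over $u\in\{1,\ldots,(p-1)/2\}$. The partial sums $\sum_{u=1}^N e_p(us)$ are bounded by $p/(2|s|)$, while the total variation of $|G(u)| = |\sin(\pi uL/p)|/|\sin(\pi u/p)|$ on this range is $O(L\log L)$: the function has peaks at $u\approx(j+\tfrac{1}{2})p/L$ of heights roughly $L/(\pi(j+\tfrac{1}{2}))$ for $j=0,1,\ldots,\lfloor L/2\rfloor$, and the total variation is controlled by twice the sum of peak heights, which is $\sim L\log L$. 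Abel's identity then yields the claimed bound on $|Z(t)|$. This total variation estimate is the main technical obstacle.

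Finally, the disjointness of the $I_j$ with common difference $\delta$ translates to the points $c_j := \delta^{-1}b_j$ being $L$-separated in $\mathbf{F}_p$. A packing argument shows that at most $2R/L$ of the $c_{j'}$ lie within distance $R$ of any fixed $c_j$, so the $k$-th nearest neighbor lies at distance $\geq kL/2$, giving $\sum_{j'\neq j}1/|c_j-c_{j'}|\leq 2\log J/L$ and hence $\sum_{j\neq j'}1/|c_j-c_{j'}|\leq 2J\log J/L$. Combining with the $Z$-bound and using $|c_j-c_{j'}|\geq L$, the off-diagonal contribution is $O(pL\log L\cdot J\log J/L) = O(pJ\log^2 p)$. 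The second factor in Cauchy--Schwarz is thus $O(pJ\log^2 p)$, and taking the product gives $\sum_{k=1}^{p-1}|T(k)|\leq CpJ^{1/2}\log^{3/2}p$, as desired.
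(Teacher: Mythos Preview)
Your argument is correct and reaches the stated bound, but it is a genuinely different route from the one in the paper.

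Both proofs start from the same factorization $T(k)=G(k\delta)F(k)$ and both ultimately exploit the $L$-separation of the normalized starting points $c_j=\delta^{-1}b_j$. The paper, however, never uses a total-variation estimate. Instead it partitions the $k$-range into short blocks $I'_l=[lp/L,(l+1)p/L]$ on which $|G(k\delta)|\le L/|l|$ is essentially constant, applies Cauchy--Schwarz \emph{locally} on each block to get
\[
\sum_{k\in I'_l}|F(k)|\le (p/L)^{1/2}\Bigl(\sum_{k\in I'_l}|F(k)|^2\Bigr)^{1/2},
\]
expands $|F|^2$, and bounds the short linear sum $\sum_{k\in I'_l}e_p(k(x_j-x_{j'}))$ by $\min(p/L,\,p/|x_j-x_{j'}|)$; the spacing then gives $\sum_{k\in I'_l}|F(k)|^2\ll Jp\log p/L$ uniformly in $l$, and summing over $l$ with the harmonic weights $L/|l|$ produces the extra $\log$.

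Your approach trades the block decomposition for a single global Cauchy--Schwarz with weight $|G|^{1/2}$ and pushes all the work into the oscillatory integral $Z(t)=\sum_{u\neq 0}|G(u)|e_p(us)$, handled by Abel summation. The price is that you must control the discrete total variation of the Fej\'er-type kernel $|G(u)|=|\sin(\pi uL/p)/\sin(\pi u/p)|$; your sketch of the $O(L\log L)$ bound via unimodality on each arch $(jp/L,(j+1)p/L)$ and summing peak heights $\sim L/(j+\tfrac12)$ is correct (the discrete total variation is dominated by the continuous one). This is arguably cleaner conceptually---one Cauchy--Schwarz, one summation by parts---while the paper's version is more elementary in that it only uses the pointwise bound $|G(u)|\le\min(L,p/2|u|)$ and avoids any smoothness of $|G|$.
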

\begin{proof}
First, note that the sum is the same if we replace the arithmetic progressions $I_j$ by intervals $I_j/\delta$, so we may assume that $\delta=1$ and the $I_j$ are all intervals. Let $x_j$ denote the first element in each interval $I_j$, so $I_j=I_0+x_j$ for $I_0=\{1,2,\dotsc, L\}$. By relabeling the intervals if necessary, we will assume that the $x_j$ are ordered. Since the intervals are disjoint, if $j\neq j'$ then $|x_j-x_{j'}|\geq L$. 

The idea is to now break the sum over $k$ up into intervals of length $p/L$, and take advantage of the different scales at which the oscillations for the sums over $j$ and $x$ occur. For $1\leq l\leq (L-1)/2$ let $I_l'$ denote the interval $[lp/L,(l+1)p/L]$ and let $I_{-l}'=-I_l'$. Then
\begin{equation}
\label{eq: break up into intervals}
\begin{split}
    &\sum_{k=1}^{p-1}\left|\sum_{j=1}^J \sum_{x\in I_j}e_p(kx)\right|
    \\\leq &L\sum_{k=-p/L}^{p/L} \left|\sum_{j=1}^J e_p(kx_j)\right|+ \sum_{\substack{l=-(L-1)/2\\{l\neq 0}}}^{(L-1)/2}\frac{L}{|l|}\sum_{k\in I_l'} \left|\sum_{j=1}^J e_p(kx_j)\right|,
\end{split}
\end{equation}
where we use $\left|\sum_{x\in I_0}e_p(kx)\right|\leq \min(L,p/2|k|)$. Now note that for any interval $I'$ of length $p/L$,
\begin{equation}
\label{eq: CS short interval}
    \sum_{k\in I'}\left|\sum_{j=1}^J e_p(kx_j)\right|\leq (p/L)^{1/2}\left(\sum_{k\in I'}\left|\sum_{j=1}^J e_p(kx_j)\right|^2\right)^{1/2}.
\end{equation}
Now
\begin{equation*}
\begin{split}
    \sum_{k\in I'}\left|\sum_{j=1}^J e_p(kx_j)\right|^2&=\sum_{j,j'=1}^J\sum_{k\in I'}e_p(k(x_j-x_{j'}))
    \\&\leq \sum_{j,j'=1}^J\min(p/L,p/|x_j-x_{j'}|)
\end{split}
\end{equation*}
where $x_j-x_{j'}$ is the representative mod $p$ between $-p/2$ and $p/2$. But now by the spacing condition and the ordering of the $x_j$, we have $|x_j-x_{j'}|\geq |j-j'|L$ (where again $j-j'$ is the representative between $-J/2$ and $J/2$ mod $J$). Thus,
\begin{equation*}
    \sum_{j,j'\in J}\min(p/L,p/|x_j-x_{j'}|)\leq \frac{Jp}{L} +2\sum_{m=1}^{J/2}\sum_{|j-j'|=m}\frac{p}{mL}\leq 3\frac{Jp}{L}\log(p),
\end{equation*}
and combined with \eqref{eq: CS short interval} this gives
\begin{equation*}
    \sum_{k\in I'}\left|\sum_{j=1}^J e_p(kx_j)\right|\leq 3\frac{pJ^{1/2}}{L}\log^{1/2}(p).
\end{equation*}
Finally, this together with \eqref{eq: break up into intervals} gives the desired inequality.
\end{proof}

Using the exponential bounds established in this section, we can prove the following estimate for the number of solutions to $f(x)=y$ with $x\in S$ and $y\in S'$ when $S$ and $S'$ are the union of arithmetic progressions.

\begin{proposition}
\label{prop: main sol estimate}
Let $S\subseteq \mathbf{F}_p$ be a disjoint union of $J$ arithmetic progressions $I_j$ of length $L$ and common difference $\delta\in\mathbf{F}_p^\times$, and let $S'$ be a disjoint union of $J$ arithmetic progressions of length $L'$ and common difference $\delta$. Suppose that $f\in \mathcal{B}(p,d)$ or $f(x)=x^2$, and suppose that $JLL'\geq p^{3/2+\varepsilon}$ for some $\varepsilon>0$. Then for large enough $p$,
\begin{equation*}
    |\{x\in S|f(x)\in S'\}|\geq c\frac{|S||S'|}{p},
\end{equation*}
where $c>0$ is some constant depending only on $\varepsilon$ and $d$.
\end{proposition}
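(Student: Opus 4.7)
The plan is to use Fourier inversion to isolate the expected main term $|S||S'|/p$ and then estimate the error using the exponential sum bounds just proved. Writing the indicator of $S'$ via characters, the number of solutions equals
\begin{equation*}
    N = \sum_{x \in S} \mathbf{1}[f(x) \in S'] = \frac{|S||S'|}{p} + \frac{1}{p}\sum_{k=1}^{p-1} \Bigl(\sum_{x \in S} e_p(kf(x))\Bigr)\Bigl(\sum_{y \in S'} e_p(-ky)\Bigr).
\end{equation*}
The strategy is to bound the $k\neq 0$ contribution by combining a uniform-in-$k$ estimate on the $f$-side with an $\ell^1$-in-$k$ estimate on the linear side, and then to check that the resulting error is dominated by half the main term whenever $JLL' \geq p^{3/2+\varepsilon}$.

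For the $f$-side, I would bound $\sum_{x\in S} e_p(kf(x)) = \sum_j \sum_{x\in I_j} e_p(kf(x))$ via Cauchy--Schwarz in $j$, giving
\begin{equation*}
    \Bigl|\sum_{x \in S} e_p(kf(x))\Bigr|^2 \leq J\sum_{j=1}^{J} \Bigl|\sum_{x\in I_j} e_p(kf(x))\Bigr|^2 \leq CJp\log^2(L+1),
\end{equation*}
by Proposition \ref{prop: complete to incomplete}, uniformly in $k\neq 0$. For the linear side, Proposition \ref{prop: lin exp bound} applied to $S'$ gives $\sum_{k=1}^{p-1}\bigl|\sum_{y\in S'} e_p(-ky)\bigr| \leq CJ^{1/2}p\log^{3/2}(p)$.

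Pulling the maximum in $k$ out of one factor and summing the other, the error term is bounded by
\begin{equation*}
    \frac{1}{p}\cdot CJ^{1/2}p^{1/2}\log(L+1)\cdot CJ^{1/2}p\log^{3/2}(p) \leq CJp^{1/2}\log^{5/2}(p).
\end{equation*}
The main term is $|S||S'|/p = J^2LL'/p$, so the error is at most half the main term provided $JLL' \geq C' p^{3/2}\log^{5/2}(p)$, which is implied by $JLL'\geq p^{3/2+\varepsilon}$ for $p$ large enough in terms of $\varepsilon$. Since all the nontrivial input has already been packaged into Propositions \ref{prop: complete to incomplete} and \ref{prop: lin exp bound}, there is no real obstacle here; the only thing to be careful about is to separate the roles of the two sums (uniform bound for one, $\ell^1$ bound for the other) rather than naively applying Cauchy--Schwarz to both, which would lose the crucial factor of $J^{1/2}$ from Proposition \ref{prop: lin exp bound}.
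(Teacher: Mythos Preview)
Your argument is correct and is essentially identical to the paper's proof: both write $N=|S||S'|/p+R$ via Fourier inversion, bound the $f$-side uniformly in $k$ by Cauchy--Schwarz over $j$ together with Proposition~\ref{prop: complete to incomplete}, bound the linear side in $\ell^1$ over $k$ via Proposition~\ref{prop: lin exp bound}, and compare the resulting $CJp^{1/2}\log^{5/2}(p)$ error to the main term using $JLL'\geq p^{3/2+\varepsilon}$.
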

\begin{proof}
We have
\begin{equation*}
\begin{split}
    |\{x\in S|f(x)\in S'\}|&=\frac{1}{p}\sum_{k=1}^p \sum _{x\in S}\sum_{y\in S'} e_p(k(f(x)-y))
    \\&=\frac{|S||S'|}{p}+R,
\end{split}
\end{equation*}
where
\begin{equation*}
    R=\frac{1}{p}\sum_{k=1}^{p-1}\sum _{x\in S}\sum_{y\in S'} e_p(k(f(x)-y)).
\end{equation*}
By the triangle inequality,
\begin{equation*}
    |R|\leq \frac{1}{p}\sum_{k=1}^{p-1}\left|\sum_{x\in S}e_p(kf(x))\right|\left|\sum_{y\in S'}e_p(-ky)\right|.
\end{equation*}
Now by Cauchy--Schwarz,
\begin{equation*}
    \left|\sum_{x\in S}e_p(kf(x))\right|\leq \sum_{j=1}^J\left|\sum_{x\in I_j}e_p(kf(x))\right|\leq J^{1/2}\left(\sum_{j=1}^J\left|\sum_{x\in I_j}e_p(kf(x))\right|^2\right)^{1/2}.
\end{equation*}
Since $k\neq p$, Proposition \ref{prop: complete to incomplete} implies,
\begin{equation*}
    \left|\sum_{x\in S}e_p(kf(x))\right|\leq J^{1/2}Cp^{1/2}\log(L).
\end{equation*}
By Proposition \ref{prop: lin exp bound},
\begin{equation*}
    \sum_{k=1}^{p-1}\left|\sum_{y\in S'}e_p(-ky)\right|\leq CJ^{1/2}p\log^{3/2}(p).
\end{equation*}
Thus,
\begin{equation*}
    |R|\leq CJp^{1/2}\log^{5/2}(p).
\end{equation*}
Now if $JLL'\geq p^{3/2+\varepsilon}$, then $|S||S'|/p\geq Jp^{1/2+\varepsilon}$ which dominates $R$. Finally, as $C$ depends only on $d$, we can choose $c>0$, depending only on the degree $d$ and $\varepsilon$, so that for large enough $p$, we have $|\{x\in S|f(x)\in S'\}|\geq c|S||S'|/p$.

\end{proof}

\section{Mixing time bounds}
\label{sec: mix bound}
The mixing time bounds for the cases when $f\in\mathcal{B}(p,d)$ and when $f(x)=x^2$ are both established using the same overall argument. This involves using Cheeger's inequality for directed graphs to reduce to a problem of counting edges between two sets, and then utilizing the results in Section \ref{sec: exp sum bounds} to show that there are many edges. The argument for the case when $f(x)=x^2$ is complicated by the fact that the stationary distribution does not have full support, and we explain how to work around this.

Let $P$ be an ergodic Markov chain on $X$ with stationary distribution $\pi$. Define the \emph{Cheeger constant} $h(P)$ by
\begin{equation*}
    h(P)=\min_{S\subseteq X}\frac{\sum_{x\in S, y\in S^c}\pi(x)P(x,y)}{\min(\pi(S),\pi(S^c))},
\end{equation*}
where $S$ ranges over all non-trivial subsets of $X$. Note that if $P$ is simple random walk on a $k$-regular (i.e. both in and out-degrees are $k$) directed graph $G$, then this definition reduces to
\begin{equation}
\label{eq: cheeger regular}
    h(P)=\min _{S\subseteq X}\frac{e(S,S^c)}{k\min(|S|,|S^c|)},
\end{equation}
where $e(S,S^c)$ is the number of edges going from $S$ to $S^c$ in $G$. When $f$ is a bijection, our random walk falls into this case. Note also that because $\pi$ is the stationary distribution,
\begin{equation*}
    \sum_{x\in S, y\in S^c}\pi(x)P(x,y)=\sum_{y\in S, x\in S^c}\pi(x)P(x,y),
\end{equation*}
and in particular for a random walk on a regular directed graph, $e(S,S^c)=e(S^c,S)$. Thus, we may consider edges in both directions up to a factor of $2$.

The key tool for the bounds on the mixing times is Cheeger's inequality for non-reversible Markov chains. The following theorem follows from some standard facts about the spectral theory of Markov chains. See for example, page 23 of \cite{MT06}, noting that $(I+P)/2$ is lazy, and $h((I+P)2)=h(P)/2$ (see also \cite{C05} for the special case of random walk on a directed graph).

\begin{theorem}
Let $P$ be an ergodic Markov chain on $X$ with stationary distribution $\pi$. Consider the lazy chain $X_n$ with transition matrix $(I+P)/2$, and starting from some deterministic $X_0$. Then if 
\begin{equation*}
    n\geq 4h(P)^{-2}(\max_{x\in X} \log(\pi(x)^{-1})+2c)
\end{equation*}
for some $c>0$, we have
\begin{equation*}
    \sup_{X_0\in X}\|\Prob(X_n\in\cdot)-\pi\|_{TV}\leq e^{-c}.
\end{equation*}
\end{theorem}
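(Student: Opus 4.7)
\emph{Plan.} The statement is a version of Mihail--Fill Cheeger-type mixing bound for non-reversible chains, and the natural route is to pass through the additive reversibilization $M = (P+P^{*})/2$, where $P^{*}(x,y) = \pi(y)P(y,x)/\pi(x)$ is the time-reversal in $L^{2}(\pi)$, use ordinary Cheeger on $M$, and then convert an $L^{2}$-contraction of $\tilde P = (I+P)/2$ into a total variation bound via Cauchy--Schwarz.

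\emph{Step 1 (reduce to a gap for $M$).} First I would observe that the flux across any set is symmetric in the $\pi$-weighted sense, so
\begin{equation*}
\sum_{x\in S,\,y\in S^{c}}\pi(x)M(x,y) \;=\; \sum_{x\in S,\,y\in S^{c}}\pi(x)P(x,y),
\end{equation*}
and hence $h(M) = h(P)$. Since $M$ is reversible with respect to $\pi$, the classical (reversible) Cheeger inequality yields
\begin{equation*}
1-\lambda_{2}(M)\;\geq\;\frac{h(P)^{2}}{2},
\end{equation*}
where $\lambda_{2}(M)$ denotes the second largest eigenvalue of $M$ on $L^{2}(\pi)$.

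\emph{Step 2 ($L^{2}$ contraction for the lazy chain).} For real $f\in L^{2}_{0}(\pi)$ (mean zero), I would expand
\begin{equation*}
\|\tilde P f\|_{2}^{2} = \tfrac14\bigl(\|f\|_{2}^{2} + 2\langle f,Pf\rangle_{\pi} + \|Pf\|_{2}^{2}\bigr).
\end{equation*}
The antisymmetric part $(P-P^{*})/2$ contributes zero to the middle term, so $\langle f,Pf\rangle_{\pi} = \langle f,Mf\rangle_{\pi}\leq \lambda_{2}(M)\|f\|_{2}^{2}$. Jensen's inequality and the stationarity of $\pi$ give $\|Pf\|_{2}\leq\|f\|_{2}$ (that is, $P$ is an $L^{2}(\pi)$ contraction). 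Combining these,
\begin{equation*}
\|\tilde P f\|_{2}^{2}\;\leq\;\tfrac{1+\lambda_{2}(M)}{2}\|f\|_{2}^{2},
\end{equation*}
and the same estimate applies to $\tilde P^{*}$ by duality. Iterating and using $1-t\leq e^{-t}$ yields
\begin{equation*}
\|\tilde P^{n}f\|_{2}\;\leq\;\exp\!\Bigl(-\tfrac{n(1-\lambda_{2}(M))}{4}\Bigr)\|f\|_{2}\;\leq\;\exp\!\Bigl(-\tfrac{nh(P)^{2}}{8}\Bigr)\|f\|_{2}.
\end{equation*}

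\emph{Step 3 (from $L^{2}$ to total variation).} Writing $\mu_{n}=\delta_{X_{0}}\tilde P^{n}$ and $g = \delta_{X_{0}}/\pi-1$, the density $\mu_{n}/\pi-1$ equals $(\tilde P^{*})^{n}g\in L^{2}_{0}(\pi)$, and a direct computation gives $\|g\|_{2}^{2} = (1-\pi(X_{0}))/\pi(X_{0})\leq 1/\pi(X_{0})$. Cauchy--Schwarz in the form $\|\mu_{n}-\pi\|_{TV}\leq \tfrac12\|\mu_{n}/\pi-1\|_{2}$ combined with Step 2 gives
\begin{equation*}
\|\mu_{n}-\pi\|_{TV}\;\leq\;\tfrac{1}{2\sqrt{\pi(X_{0})}}\exp\!\Bigl(-\tfrac{nh(P)^{2}}{8}\Bigr).
\end{equation*}
Asking the right side to be at most $e^{-c}$ and absorbing the harmless $\log 2$ term produces exactly
\begin{equation*}
n\;\geq\;4h(P)^{-2}\!\bigl(\log(\pi(X_{0})^{-1})+2c\bigr),
\end{equation*}
and taking the maximum over $X_{0}$ gives the stated condition.

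\emph{Main obstacle.} The only subtle point is Step 2: for non-reversible $P$ one cannot simply write $\|Pf\|_{2}^{2}\leq\lambda_{2}(M)\|f\|_{2}^{2}$, so the cross term and the quadratic term must be handled by two different estimates (antisymmetric-part cancellation for the cross term, Markov-contraction for $\|Pf\|_{2}$). Pushing through these two separate bounds and then averaging through the laziness is what produces the factor of $4$ rather than $8$ in the final constant.
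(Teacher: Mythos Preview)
Your argument is correct and is essentially the standard Mihail--Fill route through the additive reversibilization. The paper does not actually prove this theorem: it simply cites page 23 of Montenegro--Tetali and Chung's directed-graph Cheeger paper, together with the observation that $h((I+P)/2)=h(P)/2$. What you have written is precisely the argument one would find by chasing those references, so the two are aligned rather than genuinely different.

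A couple of small remarks. First, in Step 2 you state the bound for $\tilde P$ and then invoke ``duality'' for $\tilde P^{*}$; it is cleaner to note that the same computation applies verbatim to $\tilde P^{*}$ since $(P^{*}+(P^{*})^{*})/2=M$ as well. Second, the closing sentence about ``the factor of $4$ rather than $8$'' is slightly misleading: the $8$ in your $L^{2}$ decay rate $\exp(-nh(P)^{2}/8)$ becomes a $4$ in the final displayed condition only after you multiply through by $8h(P)^{-2}$ and then pair the $1/2$ from $\sqrt{\pi(X_{0})}$ with the $\log$ term and the $c$ separately; the arithmetic is right but the narrative could be tightened. Neither point affects correctness.
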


\subsection{Proof of Theorem \ref{thm: bijective}}
\label{sec: pf of bij}
We show that in the supremum over $S$, the set $S$ can be assumed to have some structure, at the cost of a constant. Since we are only interested in the order of the mixing time, the loss of a constant is okay.

First, we define a decomposition of any set $S\subseteq \mathbf{F}_p$ into certain types of arithmetic progressions. We will call an arithmetic progression with difference $d$ a \emph{$d$-AP}. A \emph{$2\gamma$-AP decomposition} of $S$ is a decomposition $S=\bigsqcup I_k$ where the $I_k$ are $2\gamma$-APs, the number of $I_k$'s is minimal. Such a decomposition always exists and is unique.

It is easy to see that if $S$ has a $2\gamma$-AP decomposition into $J$ $2\gamma$-APs, then the same is true of $S^c$.

We now show that when computing $\min _{S}\frac{e(S,S^c)}{\min(|S|,|S^c|)}$, we may assume that there are at most $|S|p^{-1/2-\varepsilon/2}$ $2\gamma$-APs in the $2\gamma$-AP decomposition of $|S|$. Thus, let $\mathcal{P}$ denote the set of $S$ whose $2\gamma$-AP decomposition contains at most $|S|p^{-1/2-\varepsilon/2}$ $2\gamma$-APs.
\begin{lemma}
\label{lem: bij red to int}
We have for large enough $p$,
\begin{equation*}
    \min _{S}\frac{e(S,S^c)}{\min(|S|,|S^c|)}\geq \min\left(p^{-1/2-\varepsilon/2},\min _{S\in \mathcal{P}}\frac{e(S,S^c)}{\min(|S|,|S^c|)}\right).
\end{equation*}
\end{lemma}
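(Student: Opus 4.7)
The plan is to bound $e(S, S^c) \geq J$ directly, where $J$ is the number of $2\gamma$-APs in the minimal decomposition of $S$. The key observation driving the argument is that the two possible successors $f(x)+\gamma$ and $f(x)-\gamma$ of any state $x$ always differ by exactly $2\gamma$; this ties edges of the chain to the $2\gamma$-AP structure without using any property of $f$ beyond bijectivity.

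First I would introduce the auxiliary $p$-cycle $H$ on $\mathbf{F}_p$ whose edges are the pairs $\{y, y+2\gamma\}$ (a single Hamiltonian cycle, since $\gamma\neq 0$ and $p$ is an odd prime). The $2\gamma$-AP decomposition of $S$ is exactly the decomposition of $S$ into arcs of $H$, and the same holds for $S^c$. Consequently $S$ and $S^c$ each consist of $J$ arcs of $H$, and the number of edges of $H$ with one endpoint in $S$ and the other in $S^c$ is exactly $2J$.

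Second, I would consider the map $\Phi\colon \mathbf{F}_p \to E(H)$ defined by $\Phi(x)=\{f(x)-\gamma,\,f(x)+\gamma\}$. This is a bijection: the image lies in $E(H)$ since $(f(x)+\gamma)-(f(x)-\gamma)=2\gamma$, and any edge $\{y,y+2\gamma\}$ is hit by the unique $x$ with $f(x)=y+\gamma$, which exists and is unique because $f$ is a bijection. Under $\Phi^{-1}$, the $2J$ crossing edges of $H$ correspond to exactly $2J$ values of $x$ whose successor pair splits between $S$ and $S^c$. Each such $x$ contributes $1$ either to $e(S,S^c)$ (if $x\in S$) or to $e(S^c,S)$ (if $x\in S^c$). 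Combining this with the identity $e(S,S^c)=e(S^c,S)$ noted in the paper (valid for regular directed graphs) yields $2\,e(S,S^c)\geq 2J$, hence $e(S,S^c)\geq J$.

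Finally, the lemma drops out: if $S\notin \mathcal{P}$, then by definition $J > |S|\,p^{-1/2-\varepsilon/2} \geq \min(|S|,|S^c|)\,p^{-1/2-\varepsilon/2}$, so
\begin{equation*}
\frac{e(S,S^c)}{\min(|S|,|S^c|)} \;\geq\; \frac{J}{\min(|S|,|S^c|)} \;>\; p^{-1/2-\varepsilon/2},
\end{equation*}
which matches the first term in the stated minimum. If instead $S\in \mathcal{P}$, the ratio already appears in the inner minimum on the right-hand side. I do not foresee a real obstacle; the only step requiring care is verifying that $\Phi$ is a bijection onto $E(H)$ and correctly accounting for the factor of two that converts the $2J$ crossing edges of $H$ into the lower bound $e(S,S^c)\geq J$ for the chain.
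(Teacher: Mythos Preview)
Your proposal is correct and follows essentially the same approach as the paper. The paper argues that each right endpoint $x$ of a $2\gamma$-AP in $S$ satisfies $x\in S$, $x+2\gamma\in S^c$, and both are targets of $f^{-1}(x+\gamma)$, yielding a crossing edge; your bijection $\Phi$ formalizes the same observation and, by also using the left endpoints (equivalently, all $2J$ crossing edges of $H$), picks up an extra factor of $2$ that the paper does not bother to record.
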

\begin{proof}
Assume that $|S|\leq |S^c|$. Write $S=\bigsqcup I_k$ the $2\gamma$-AP decomposition. Note that each right endpoint of a 2$\gamma$-AP contributes an edge from either $S$ to $S^c$ or from $S^c$ to $S$, because if $x$ is a right endpoint, then $x+2\gamma\in S^c$ and $x$ and $x+2\gamma$ are both connected to $f^{-1}(x+\gamma)$. If there were more than $|S|p^{-1/2-\varepsilon/2}$ many $2\gamma$-APs, then this would imply
\begin{equation*}
    \frac{e(S,S^c)}{\min(|S|,|S^c|)}\geq p^{-1/2-\varepsilon/2}.
\end{equation*}
\end{proof}

The $2\gamma$-AP decomposition of sets in $\mathcal{P}$ have arithmetic progressions of different lengths. The following lemma lets us reduce to the case when all arithmetic progressions have the same length.

\begin{lemma}
\label{lem: subint same length}
Let $S=\bigsqcup_{j=1}^J I_j\subseteq \mathbf{F}_p$ be a disjoint union of arithmetic progressions with common difference $\delta\in\mathbf{F}_p^\times$. Let $L=|S|/J$ denote the average length of the $I_j$. Then $S$ contains $J$ disjoint arithmetic progressions $I_j'$ with common difference $\delta$ and length $\lfloor L/4\rfloor$.
\end{lemma}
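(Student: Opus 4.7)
The plan is a direct counting argument. Since each $I_j$ is itself a $\delta$-arithmetic progression, any consecutive block of its terms is again a $\delta$-AP with the same common difference, so it suffices to show that on average we can carve at least one block of length $M := \lfloor L/4\rfloor$ out of each $I_j$.

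First I would dispose of the degenerate case $L<4$, in which $M=0$ and the conclusion holds vacuously (taking $J$ empty sub-APs). So assume $L\geq 4$, hence $M\geq 1$. Writing $L_j := |I_j|$ so that $\sum_{j=1}^J L_j = JL$, I would greedily chop $I_j$ into $\lfloor L_j/M\rfloor$ consecutive $\delta$-APs of length exactly $M$, discarding the $L_j - M\lfloor L_j/M\rfloor < M$ leftover terms at the tail of $I_j$.

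Summing over $j$ and using $\lfloor x\rfloor\geq x-1$ gives
\[
\sum_{j=1}^J \lfloor L_j/M\rfloor \;\geq\; \sum_{j=1}^J\left(\frac{L_j}{M}-1\right) \;=\; \frac{JL}{M} - J \;\geq\; 4J - J \;=\; 3J,
\]
where the last inequality uses $M\leq L/4$. Sub-APs coming from different $I_j$ are disjoint because the $I_j$ themselves are disjoint, and sub-APs coming from the same $I_j$ are disjoint by construction. Selecting any $J$ of the $3J$ resulting sub-APs yields the desired $I_j'$. There is no real obstacle here beyond this bookkeeping.
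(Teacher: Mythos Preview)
Your proof is correct and follows essentially the same approach as the paper's: both arguments chop the $I_j$ into consecutive blocks of length $\lfloor L/4\rfloor$ and count. The paper first discards intervals with $|I_j|<L/2$ before chopping, whereas you chop every $I_j$ directly; your version is slightly cleaner and even yields $3J$ sub-APs rather than just $J$.
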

\begin{proof}
First, note that it suffices to prove the result for intervals. We have
\begin{equation*}
    \sum_{|I_j|\geq L/2}|I_j|\geq |S|/2,
\end{equation*}
and by splitting the intervals $I_j$ with $|I_j|\geq L/2$ up into intervals of length exactly $\lfloor L/4\rfloor$, we throw away at most $JL/4$ points. Thus, we obtain disjoint intervals of length $\lfloor L/4\rfloor$, whose total length is at least $|S|/4$, and so there must be at least $J$ such intervals.
\end{proof}

We are now in a position to prove Theorem \ref{thm: bijective} using the bounds given by Proposition \ref{prop: main sol estimate}.
\begin{proof}[Proof of Theorem \ref{thm: bijective}]
By Cheeger's inequality, it suffices to show that the Cheeger constant is bounded from below by $p^{-1/2-\varepsilon/2}$. By Lemma \ref{lem: bij red to int}, it suffices to give a lower bound for 
\begin{equation*}
    \min _{S\in \mathcal{P}}\frac{e(S,S^c)}{\min(|S|,|S^c|)}
\end{equation*}
of order $p^{-1/2-\varepsilon/2}$ (in fact when $S\in\mathcal{P}$ we'll give a constant order lower bound). Thus, take $S\in\mathcal{P}$ and assume that $|S|\leq |S^c|$. 

Using Lemma \ref{lem: subint same length}, we can find subsets $S_1\subseteq S$ and $S_2\subseteq S^c$ with $|S_1|\geq |S|/4$ and $|S_2|\geq |S^c|/4$ which are a disjoint union of at most $J\leq |S|p^{-1/2-\varepsilon/2}$ $2\gamma$-APs of fixed lengths $L_1$ and $L_2$.

Now $J\leq |S|p^{-1/2-\varepsilon/2}$ and $JL_1=|S_1|\geq |S|/4$, and so $L_1\geq p^{1/2+\varepsilon/2}/4$. Similarly, $JL_2\geq |S^c|/4\geq p/8$. Then $JL_1L_2\geq p^{3/2+\varepsilon/2}/32$, and so Proposition \ref{prop: main sol estimate} shows that
\begin{equation*}
    e(S,S^c)\geq e(S_1,S_2)\geq c\frac{|S||S^c|}{p}
\end{equation*}
for some positive constant $c$ depending only on $d$ and $\varepsilon$. This implies that
\begin{equation*}
    \frac{e(S,S^c)}{\min(|S|,|S^c|)}\geq c\frac{|S^c|}{p}\geq \frac{c}{2}.
\end{equation*}
Since the estimate is uniform over $S\in\mathcal{P}$, this completes the proof.
\end{proof}

\subsection{Proof of Theorem \ref{thm: sq and add mixing time}}
We now sketch the adjustments that have to be made to handle the case of $f(x)=x^2$. The main difficulty lies in the fact that the support of the stationary distribution $\supp\pi_p$ is not all of $\mathbf{F}_p$. To adapt the argument from the bijective case, we take advantage of the fact that when $p=3\pmod{4}$, at least one of $\alpha$ and $-\alpha$ lies in $\supp\pi$, and that $\pi$ is close to uniform on its support.

Consider now the random walk restricted to $\supp\pi$. Since the walk enters this set after a single step, it suffices to bound the mixing time for the restricted walk. Now although the walk is not a simple random walk on a regular graph, the transition probabilities are of constant order in $p$ and by Theorem \ref{thm: sq and add} the stationary distribution is essentially constant up to a factor of $4$. Thus, \eqref{eq: cheeger regular} holds up to some constant factor, and so we provide a lower bound for $e(S,S^c)/\min(|S|,|S^c|)$.

We now adapt the $2\gamma$-AP decomposition to this setting. A set $S\subseteq \supp\pi$ is \emph{symmetric} if it is of the form $\widetilde{S}\cap \supp\pi$ for a set $\widetilde{S}\subseteq \mathbf{F}_p$ with $\widetilde{S}=-\widetilde{S}$. A \emph{symmetric $2\gamma$-AP} is a set of the form $(J\cup -J)\cap \supp \pi$, where $J$ is a $2\gamma$-AP contained in $\{0,1,\dotsc, (p-1)/2\}$. A \emph{$2\gamma$-AP decomposition} of a symmetric set $S\subseteq \supp\pi$ is a decomposition $S=\bigsqcup I_k$ where the $I_k$ are of the form $I_k=\supp\pi\cap (J_k\cup -J_k)$, and the $J_k$ are $2\gamma$-APs contained in $\{0,1,\dotsc, (p-1)/2\}$, such that the number of $I_k$'s is minimal. It is clear that this exists and is unique, and moreover the $2\gamma$-APs $J_k$ are uniquely determined as well.

The idea is to use this as a replacement for the $2\gamma$-AP decomposition used in the proof for $f\in\mathcal{B}(p,d)$.

Let $\mathcal{P}$ denote the set of subsets $S\subseteq \supp\pi$ which are symmetric, and whose $2\gamma$-AP decomposition contains at most $J\leq |S|p^{-1/2-\varepsilon/2}$ $2\gamma$-APs.

\begin{lemma}
We have for large enough $p$,
\begin{equation*}
    \min _{S}\frac{e(S,S^c)}{\min(|S|,|S^c|)}\geq \max\left(p^{-1/2-\varepsilon/2},\frac{1}{16}\min _{S\in \mathcal{P}}\frac{e(S,S^c)}{\min(|S|,|S^c|)}\right).
\end{equation*}
\end{lemma}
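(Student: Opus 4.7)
The proof establishes both inequalities of the $\max$ separately, following Lemma \ref{lem: bij red to int} with an added symmetrization step. Fix $S\subseteq\supp\pi$ with $|S|\leq|S^c|$. Write $D=\{\alpha\in S:-\alpha\in\supp\pi\setminus S\}$ for the asymmetric part of $S$, and set $T=S\setminus D$, so that $T$ is a symmetric subset of $\supp\pi$ of size $|T|=|S|-|D|$.

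The structural observation specific to the square-and-add chain is that every $\alpha\in D$ forces boundary edges. Since $\pm\alpha$ share the image set $\{\alpha^2\pm\gamma\}$ and lie on opposite sides of $S$, exactly one of the two edges to each image crosses the $S/S^c$ boundary. Distinct pairs $\{\pm\alpha\}$ indexed by $D$ contribute disjoint boundary edges (different pairs have different $\alpha^2$), so $e(S,S^c)+e(S^c,S)\geq 2|D|$. By stationarity and the factor-of-$4$ variation of $\pi$ on $\supp\pi$ (Theorem \ref{thm: sq and add}), $e(S,S^c)$ and $e(S^c,S)$ are comparable up to an absolute constant, yielding $e(S,S^c)\geq c_1|D|$ for some $c_1>0$.

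For the first inequality $\min_S\geq p^{-1/2-\varepsilon/2}$: if $|D|\geq c_1^{-1}|S|p^{-1/2-\varepsilon/2}$, the observation above suffices. Otherwise, apply the symmetric $2\gamma$-AP decomposition to $T$: if it uses more than $|T|p^{-1/2-\varepsilon/2}$ pieces, then each right endpoint $x$ of an underlying $J_k\subseteq\{0,\dotsc,(p-1)/2\}$ produces a boundary edge through a square-root preimage in $\supp\pi$, which exists by the parity argument of Lemma \ref{lem: ergodic}. This transfers from $T$ to $S$ up to an $O(|D|)$ correction, again giving $\geq p^{-1/2-\varepsilon/2}$. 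For the second inequality $\min_S\geq\tfrac{1}{16}\min_\mathcal{P}$: if $|D|$ is small and the symmetric AP count of $T$ is small, then $T\in\mathcal{P}$, and $T\subseteq S$ together with the in-degree bound $\leq 4$ on $\supp\pi$ gives
\[
e(S,S^c)\geq e(T,S^c)=e(T,T^c)-e(T,D)\geq \min_\mathcal{P}\cdot\min(|T|,|T^c|)-4|D|.
\]
Combined with $|T|\geq|S|/2$, $|T^c|\geq|S^c|/2$, and the smallness of $|D|$, this yields $e(S,S^c)/|S|\geq\tfrac{1}{16}\min_\mathcal{P}$ after absorbing a factor $4$ from $\pi$-variation, a factor $2$ from $|T|$ vs.\ $|S|$, and a factor $2$ from the minimum comparison.

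The main obstacle is the delicate subtraction $\min_\mathcal{P}\cdot\min(|T|,|T^c|)-4|D|$: if $\min_\mathcal{P}$ were smaller than $|D|/|S|$, the bound would become vacuous. The resolution is that in the $|D|$-small regime, $|D|/|S|\leq p^{-1/2-\varepsilon/2}\to 0$, which is smaller than any fixed positive constant; and when $\min_\mathcal{P}$ itself is below $p^{-1/2-\varepsilon/2}$, the first inequality of the $\max$ dominates and is already handled by the direct argument. A secondary care point is that the symmetric-AP counting requires right endpoints of $J_k$ in the half-interval to yield genuine boundary edges, relying on at least one of $\pm\sqrt{x+\gamma}$ or $\pm\sqrt{-x-\gamma}$ lying in $\supp\pi$, which is guaranteed by $p\equiv 3\pmod 4$.
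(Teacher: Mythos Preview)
The lemma as stated (with $\max$) appears to be a typo for $\min$; compare Lemma~\ref{lem: bij red to int} and the way the result is used in the proof of Theorem~\ref{thm: sq and add mixing time}. The paper's proof is a case analysis establishing the $\min$ version: either the ratio is at least $p^{-1/2-\varepsilon/2}$ (many asymmetric pairs, or after symmetrizing many APs), or after symmetrizing one lands in $\mathcal P$ and the ratio is at least $(1/16)\min_{\mathcal P}$. You take the $\max$ at face value and try to prove both inequalities for every $S$, so each half of your argument covers only some cases: your ``first inequality'' proof is silent when $|D|$ is small and the AP count of $T$ is small, and your ``second inequality'' proof is silent when $|D|$ is large.

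Beyond this, the symmetrization step genuinely differs. The paper \emph{enlarges} $S$ to $S'=S\cup\{\alpha\in S^c:-\alpha\in S\}$ and bounds $e(S',(S')^c)\leq e(S,S^c)+6n\leq 8\,e(S,S^c)$, where $n=|S'\setminus S|$ and the last step uses that the $n$ asymmetric pairs already furnish at least $2n$ crossing edges. You instead \emph{shrink} to $T=S\setminus D$, which forces the subtraction
\[
e(S,S^c)\geq e(T,T^c)-4|D|.
\]
This is exactly the obstacle you flag: when $\min_{\mathcal P}$ is small (the lemma does not exclude this), the term $4|D|$ can exceed $\min_{\mathcal P}\cdot|T|$ and the bound becomes vacuous. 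Your proposed resolution (``the first inequality of the $\max$ dominates'') does not close the gap, because in precisely that regime you have not established the first inequality. The paper's additive symmetrization avoids the issue entirely, incurring only a multiplicative loss rather than a subtractive one; the AP-endpoint count is then applied directly to the already-symmetric $S'$.
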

\begin{proof}
Assume that $|S|\leq |S^c|$, which can be done since $e(S,S^c)$ and $e(S^c,S)$ differ by at most a multiplicative constant, since $\pi$ is essentially constant on $\supp\pi$ up to a factor of $4$. Note that if $\alpha\in S$ and $-\alpha\in S^c$, then as both are connected to $\alpha^2\pm \gamma$, this contributes $2$ edges from $S$ to $S^c$. Thus, if there are more than $|S|^{1/2-\varepsilon/2}$ such pairs, we have
\begin{equation*}
    \frac{e(S,S^c)}{\min(|S|,|S^c|)}\geq |S|^{-1/2-\varepsilon/2}\geq p^{-1/2-\varepsilon/2}.
\end{equation*}

Otherwise, replace $S$ with the set $S'=S\cup \{\alpha\in S^c|-\alpha\in S\}$. Let $n=|S'|-|S|$, and note that $n\leq |S|^{1/2-\varepsilon/2}$ and $e(S,S^c)\geq 2n$. For large enough $p$, we have
\begin{equation}
\label{eq: denominator ineq}
    \min(|S|,|S^c|)=|S|\leq 2\min(|S'|,|(S')^c|),
\end{equation}
since either $\min(|S'|,|(S')^c|)=|S'|$ for which the statement obviously holds, or $|S|\geq |\supp\pi|/2-n$, in which case we have
\begin{equation*}
	|(S')^c|=|S^c|-n\geq |S|-|S|^{1/2-\varepsilon/2},
\end{equation*}
and for large enough $p$ this also implies \eqref{eq: denominator ineq}.

The numerator increases by at most $6n$, since each vertex has degree at most $6$, and so for large enough $p$
\begin{equation*}
    e(S',(S')^c)\leq e(S,S^c)+6n\leq 8e(S,S^c).
\end{equation*}
This implies that the minimum can be restricted to symmetric $S$, at the cost of the $1/16$ factor.

As in the proof of Lemma \ref{lem: bij red to int}, each right endpoint of a $2\gamma$-AP appearing in the decomposition of $S$ contributes an edge from $S$ to $S^c$. This is because if $x$ is the right endpoint of a $2\gamma$-AP $J$ appearing in the decomposition, one of $x+\gamma$ and $-x-\gamma$ is a quadratic residue, and thus either $x\in S$ and $x+2\gamma\in S^c$ contributes an edge, or $-x\in S$ and $-x-2\gamma\in S^c$ contributes an edge.
\end{proof}

With this replacement for Lemma \ref{lem: bij red to int}, we can now prove Theorem \ref{thm: sq and add mixing time}.

\begin{proof}[Proof of Theorem \ref{thm: sq and add mixing time}]
As in the proof of Theorem \ref{thm: bijective}, it suffices to give a lower bound for $e(S,S^c)/|S|$ when $S\in\mathcal{P}$ and $|S|\leq |S^c|$. A small extension of Lemma \ref{lem: subint same length} then reduces to sets $S_1$ and $S_2$ both of comparable size to $S$ and $S^c$ respectively, with $S_1$ and $S_2$ being the disjoint union of symmetric $2\gamma$-APs of lengths $L_1$ and $L_2$ respectively.

Note that if $J$ is a $2\gamma$-AP appearing in the decomposition of $S_1$, then if $x\in J$, at least one of $x$ and $-x$ belong to $S_1$. If there is an edge from $x\in J$ to $S_2$, there is also an edge from $-x$ to $S_2$, so if $S_1=\bigsqcup (J_j\cup -J_j)\cap \supp \pi$, then $e(S_1,S_2)\geq e(\bigsqcup J_j,S_2)$. Further, if $S_2=\bigsqcup (K_j\cup -K_j)\cap \supp\pi$, then any edge landing in $K_j$ must also land in $\supp \pi$, because $\supp\pi$ contains all elements of the form $\alpha^2\pm \gamma$, and so $e(\bigsqcup J_j, S_2)\geq e(\bigsqcup J_j,\bigsqcup K_j)$.

Since the sets have been written as a disjoint union of $2\gamma$-APs, we can now apply Proposition \ref{prop: main sol estimate} and proceed as before.
\end{proof}

\subsection{A variant of the lazy chain}
Here, we sketch the adjustments needed to deal with the original Markov chains appearing in \cite{CD20}. In the Markov chains considered there, the function $f$ is always applied, but $\varepsilon_n=0$ with positive probability. Note that the arguments in \cite{CD20} only work when $f$ is bijective, even though the Markov chain when $f(x)=x^2$ still makes sense, so we restrict to the setting where $f\in\mathcal{B}(p,d)$.

\begin{theorem}
\label{thm: bij other lazy}
Let $p$ be a prime and $d\in\mathbf{N}$. Let $f\in\mathcal{B}(p,d)$, and let $\gamma\in\mathbf{F}_p$ be non-zero. Let $\varepsilon_i$ denote independent random variables uniform on $\{-\gamma, 0, \gamma\}$. Consider the Markov chain defined by
\begin{equation*}
    X_{n+1}=f(X_n)+\varepsilon_{n+1}.
\end{equation*}
Then if $\pi$ denotes the uniform distribution on $\mathbf{F}_p$, for any $\varepsilon$, there is some constant $C=C(\varepsilon,d)$ such that
\begin{equation*}
    \sup_{X_0\in\mathbf{F}_p}\|\Prob(X_n\in\cdot)-\pi\|_{TV}\leq e^{-C\frac{n}{p^{1+\varepsilon}}}.
\end{equation*}
\end{theorem}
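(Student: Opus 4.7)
The argument is a direct adaptation of the proof of Theorem \ref{thm: bijective} in Section \ref{sec: pf of bij}. Let $P$ be the transition matrix of the chain $X_{n+1}=f(X_n)+\varepsilon_{n+1}$. The plan is to apply the paper's Cheeger inequality (to the lazy chain $(I+P)/2$), thereby reducing to a lower bound on the Cheeger constant $h(P)$, and then combine the right-endpoint $2\gamma$-AP argument with Proposition \ref{prop: main sol estimate} to obtain $h(P)\geq c\,p^{-1/2-\varepsilon/2}$.

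\textbf{Setup.} Because $f$ is a bijection and the distribution of $\varepsilon_i$ is symmetric about $0$, the uniform distribution $\pi$ remains stationary. The underlying directed graph has in- and out-degree $3$ at every vertex, with edges $x\to f(x)$ and $x\to f(x)\pm\gamma$, so that
\begin{equation*}
    h(P)=\frac{1}{3}\min_{S}\frac{e(S,S^c)}{\min(|S|,|S^c|)}.
\end{equation*}
Irreducibility is immediate since $\gamma$ generates $\mathbf{F}_p$ additively; aperiodicity follows from the deterministic edge $x\to f(x)$ combined with the $\pm\gamma$ shifts, which let one perturb any cycle of $f$ to produce closed walks of coprime lengths.

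\textbf{Reduction and edge count.} The right-endpoint observation underpinning Lemma \ref{lem: bij red to int} carries over to the new chain: if $x$ is a right endpoint of a $2\gamma$-AP in the decomposition of $S$, then $z:=f^{-1}(x+\gamma)$ has out-edges both to $f(z)-\gamma=x$ and to $f(z)+\gamma=x+2\gamma$, so $z$ contributes at least one cut edge regardless of which side of $S$ it lies on. Hence either $h(P)\gg p^{-1/2-\varepsilon/2}$ directly, or $S\in\mathcal{P}$ and we apply Lemma \ref{lem: subint same length} to extract $S_1\subseteq S$ and $S_2\subseteq S^c$ of sizes comparable to $|S|$ and $|S^c|$, each a disjoint union of $J\leq |S|p^{-1/2-\varepsilon/2}$ many $2\gamma$-APs of common lengths $L_1,L_2$ with $JL_1L_2\gg p^{3/2+\varepsilon/2}$. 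The straight edges $x\to f(x)$ already suffice: Proposition \ref{prop: main sol estimate} applied to $S_1$ and $S_2$ yields $|\{x\in S_1:f(x)\in S_2\}|\gg |S_1||S_2|/p$, whence $e(S,S^c)\gg |S||S^c|/p$ and so $h(P)\gg 1$ for $S\in\mathcal{P}$.

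\textbf{Conclusion and main obstacle.} Combining these two regimes gives $h(P)\geq c\,p^{-1/2-\varepsilon/2}$, and Cheeger's inequality applied to $(I+P)/2$ (whose Cheeger constant is $h(P)/2$) yields the stated total-variation bound. The only genuinely new point compared to Theorem \ref{thm: bijective} is that the paper's Cheeger theorem is stated for the lazy chain $(I+P)/2$, whereas Theorem \ref{thm: bij other lazy} concerns $P$ itself; this gap is closed either by invoking a directed-chain Cheeger inequality as in \cite{C05, MT06}, or by using the aperiodicity of $P$ to compare the mixing time of $P$ with that of its lazy version at the cost of an absolute constant which is absorbed into $C$. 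All of the analytic input---the exponential sum bounds of Section \ref{sec: exp sum bounds} and the structural reductions---carries over unchanged.
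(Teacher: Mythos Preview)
Your bound on the Cheeger constant $h(P)$ is correct and, pleasantly, simpler than what the paper does: since the chain has the ``straight'' edges $x\to f(x)$, Proposition~\ref{prop: main sol estimate} for $f$ itself already gives $e(S,S^c)\gg |S||S^c|/p$ when $S\in\mathcal{P}$, and the right-endpoint argument goes through verbatim. So the combinatorial and analytic parts of your sketch are fine.

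The gap is exactly the one you flag in your last paragraph, and it is not a technicality. Neither of your proposed fixes works as stated. The directed Cheeger inequalities in \cite{C05,MT06} bound the spectral gap of the additive reversibilization $(P+P^*)/2$ (equivalently, of the lazy chain), and the mixing of the non-lazy $P$ is governed instead by the singular values of $P$, i.e.\ by the spectrum of $P^*P$; there is no general inequality relating $h(P)$ to the gap of $P^*P$. Indeed, writing $P=QF$ with $Q$ the ``add $\varepsilon$'' kernel and $F$ the permutation matrix of $f$, one has $P^*P=F^{-1}Q^2F$, whose spectral gap is that of $Q^2$ and hence only $O(p^{-2})$---so a naive singular-value argument is hopeless, and $h(P)$ alone cannot rescue it. Likewise, aperiodicity does \emph{not} in general give a constant-factor comparison between the mixing times of $P$ and $(I+P)/2$ for non-reversible chains; that comparison typically requires a uniform holding probability $P(x,x)\geq\alpha>0$, which the present chain lacks (the event $\varepsilon=0$ sends $x$ to $f(x)$, not to $x$).

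This is precisely why the paper takes a different route. It invokes the machinery of \cite{CD20}, which for chains of the form $QF$ with $Q$ reversible reduces the mixing question to a Cheeger bound on a \emph{symmetrized} walk, concretely on the graph associated with $PfP^2f^{-1}P$. After discarding edges, the paper is led to analyze the bijection $g(x)=f(f^{-1}(x)+\gamma)$ and must redo the exponential-sum analysis for $g$: Lemma~\ref{lem: not const} ensures $\alpha f(a-\gamma)+kf(a)$ is non-constant so that the Weil bound applies, and then the analogue of Lemma~\ref{lem: complete estimate rational} goes through for $g$. The rest of Section~\ref{sec: exp sum bounds} then carries over to yield a version of Proposition~\ref{prop: main sol estimate} for $g$, and Cheeger on the symmetrized (reversible) object finishes the proof. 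In short, your expansion estimate is correct but attached to the wrong operator; the paper's extra work with $g$ is exactly what is needed to attach it to an operator whose spectral gap controls the mixing of the non-lazy chain.
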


By the arguments of Section 2 in \cite{CD20}, it suffices to bound the second largest eigenvalue of a symmetrization using Cheeger's inequality, which reduces to showing that
\begin{equation*}
    \frac{e(S,S^c)}{\min(|S|,|S^c|)}
\end{equation*}
is large in the graph determined by the random walk $Pf^{-1}P^2fP$, where $P$ is the transition matrix for taking a random step of either $0$ or $\pm \gamma$ with equal probability, and $f$ is the permutation matrix corresponding to the bijection $f$. Actually to make an argument later work, we instead work with $PfP^2f^{-1}P$, which has the same (non-zero) spectrum.

Now certainly, it cannot hurt to throw away edges, and so we instead consider the bijective function $g(x)=f(f^{-1}(x)+\gamma)$, and consider the graph with edges connecting $x$ to $g(x)\pm \gamma$ (we are forcing a lazy first step from $P$, and after appling $f^{-1}$ one lazy step from $P$ and then adding $\gamma$).

Now from the arguments in Section \ref{sec: pf of bij}, if the set $S$ has a $2\gamma$-AP decomposition with too many intervals, then $e(S,S^c)$ will be large since every interval contributes an edge, and otherwise we can apply the exponential sum bounds of Section \ref{sec: exp sum bounds}, with an analogous version of Lemma \ref{lem: complete estimate rational}. We first need the following result to apply the Weil bound in this situation.

\begin{lemma}
\label{lem: not const}
Let $P,Q\in\mathbf{F}_p[x]$ be coprime and of degree at most $p/4$, and suppose that $P(x)/Q(x)$ is not constant or linear. Then for all $\alpha,\beta,\gamma\in\mathbf{F}_p$ with $\beta,\gamma\neq 0$, the function
\begin{equation*}
    \alpha P(x+\gamma)/Q(x+\gamma)+\beta P(x)/Q(x)
\end{equation*}
is not a constant function.
\end{lemma}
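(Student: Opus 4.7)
The strategy is a contradiction argument. Assume that the rational function $\alpha P(x+\gamma)/Q(x+\gamma)+\beta P(x)/Q(x)$ equals some constant $c\in\mathbf{F}_p$, and deduce that $f:=P/Q$ must be linear or constant.

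First I would upgrade the hypothesis from a pointwise identity on $\mathbf{F}_p\setminus\{\text{poles}\}$ to an identity in the rational function field $\mathbf{F}_p(x)$. Clearing denominators in $\alpha f(x+\gamma)+\beta f(x)=c$ produces a polynomial identity of degree at most $\deg P+\deg Q\leq p/2$ that holds at all but at most $2\deg Q\leq p/2$ points of $\mathbf{F}_p$; under the degree bound $\deg P,\deg Q\leq p/4$ (with some care tracking the leading coefficient when $\deg P=\deg Q$), the polynomial must vanish identically, giving the identity in $\mathbf{F}_p(x)$.

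If $\alpha=0$, the identity becomes $\beta f(x)=c$, so $f$ is constant, contradicting the hypothesis. For $\alpha\neq 0$, rewrite the identity as $f(x+\gamma)=T(f(x))$ for the affine map $T(y)=(c-\beta y)/\alpha$, and iterate to get $f(x+n\gamma)=T^n(f(x))$ as rational functions for every $n$. Taking $n=p$ and using $p\gamma=0$ in $\mathbf{F}_p$, we obtain $T^p(f(x))=f(x)$ in $\mathbf{F}_p(x)$. Writing $T^p(y)=ay+b$, this reads $(a-1)f(x)=-b$; since $f$ is non-constant in $\mathbf{F}_p(x)$, we must have $a=1$ and $b=0$, i.e., $T^p=\mathrm{id}$. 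A direct computation together with Fermat's little theorem applied to $r=-\beta/\alpha$ (so $r^p=r$) forces $r=1$, and hence $\beta=-\alpha$.

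The identity now reads $f(x+\gamma)-f(x)=s$ in $\mathbf{F}_p(x)$ where $s=c/\alpha$. If $s=0$, then $P(x+\gamma)Q(x)=P(x)Q(x+\gamma)$; coprimality of $P$ and $Q$ together with leading-coefficient matching force $P(x+\gamma)=P(x)$ and $Q(x+\gamma)=Q(x)$, and since both have degree below $p$ and $\gamma\neq 0$, they must be constants, making $f$ constant. If $s\neq 0$, the rational function $g(x):=f(x)-(s/\gamma)x$ satisfies $g(x+\gamma)=g(x)$; the same coprimality-plus-degree argument applied to the lowest-terms form of $g$ (whose numerator and denominator still have degree below $p$) forces $g$ to be constant, so $f$ is linear. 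Either case contradicts the hypothesis on $P/Q$. I expect the delicate step to be the first one: the degree bound $p/4$ is essentially saturated, so one must examine the leading coefficient when $\deg P=\deg Q$ to rule out a non-trivial polynomial of degree exactly $p/2$. The remaining iteration and periodicity arguments are formal once the identity lives in $\mathbf{F}_p(x)$.
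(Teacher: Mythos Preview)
Your argument is correct, but it takes a longer and more dynamical route than the paper's. After clearing denominators and passing to the polynomial identity
\[
\alpha P(x+\gamma)Q(x)+\beta P(x)Q(x+\gamma)=cQ(x)Q(x+\gamma),
\]
the paper observes directly that coprimality of $P$ and $Q$ forces $Q(x)\mid Q(x+\gamma)$ and $Q(x+\gamma)\mid Q(x)$, hence $Q(x+\gamma)=Q(x)$; the degree bound $\deg Q<p$ then makes $Q$ constant. The remaining identity $\alpha P(x+\gamma)+\beta P(x)=c'$ is dispatched by comparing the top two coefficients: the leading one gives $\alpha=-\beta$, and the next one forces $\deg P\leq 1$. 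By contrast, you keep $f=P/Q$ as a rational function, rewrite the relation as $f(x+\gamma)=T(f(x))$ for an affine $T$, iterate $p$ times, and use Fermat's little theorem to force $T$ to be a translation ($\beta=-\alpha$); only then do you invoke the periodicity/coprimality argument on $g(x)=f(x)-(s/\gamma)x$. Your approach is sound and the bookkeeping with degrees is fine (for odd $p$ the bound $\deg P,\deg Q\le p/4$ is never saturated, so the ``delicate step'' you flag is not actually an issue). The paper's route is shorter because it kills $Q$ immediately rather than carrying the rational-function structure through an iteration; your dynamical reformulation is a nice viewpoint but does not buy anything extra here.
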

\begin{proof}
Suppose that $\alpha P(x+\gamma)/Q(x+\gamma)+\beta P(x)/Q(x)=c$ away from the poles of $Q(x)$ and $Q(x+\gamma)$. Then $\alpha P(x+\gamma)Q(x)+\beta P(x)Q(x+\gamma)=cQ(x)Q(x+\gamma)$ for more than $p/2$ points, and so both sides must be equal as polynomials since they are polynomials of degree at most $p/2$. Then as $P$ and $Q$ are coprime, $Q(x)$ must divide $Q(x+\gamma)$ and $Q(x+\gamma)$ must divide $Q(x)$, and by comparing the coefficient of the highest degree term it's clear that $Q(x)=Q(x+\gamma)$, so $Q(x)$ is a constant. Then $\alpha P(x+\gamma)+\beta P(x)=c$, and if $P$ is not a constant, this implies $\alpha=-\beta$ by comparing the highest degree term. Then the coefficient of the second highest degree term on the left is non-zero, so $P$ must be a linear function.
\end{proof}

Now, we prove the analogue of Lemma \ref{lem: complete estimate rational}.

\begin{lemma}
Let $f\in\mathcal{B}(p,d)$ and let $g(x)=f(f^{-1}(x)+\gamma)$. Then for any interval $I$, and any $k\in\mathbf{F}_p^\times$,
\begin{equation*}
    \sum_{n=1}^p\left|\sum_{x\in I}e_p(kg(x+n)) \right|^2\leq C p|I|,
\end{equation*}
where the constant $C$ depends only on $d$.
\end{lemma}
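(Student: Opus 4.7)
The plan is to follow the proof of Lemma \ref{lem: complete estimate rational} closely, with one extra change of variable inserted to put the inner sum in a form where the Weil bound applies. First I would expand the square and run the same Fourier-inversion step as in Lemma \ref{lem: complete estimate rational}, using that $g$ is a bijection of $\mathbf{F}_p$ (inherited from $f$), to obtain
\begin{equation*}
\sum_{n=1}^p\left|\sum_{x\in I} e_p(kg(x+n))\right|^2 = p^{-1}\sum_{\alpha=1}^p \left|\sum_{a\in\mathbf{F}_p} e_p(\alpha a + k g(a))\right|^2 \left|\sum_{x\in I} e_p(\alpha x)\right|^2.
\end{equation*}
Granting a bound of $Cp^{1/2}$ on the inner $a$-sum for each $\alpha$, the remainder of the argument is identical to that of Lemma \ref{lem: complete estimate rational} (splitting the sum over $\alpha$ at $|\alpha|=p/|I|$ and using $|\sum_{x\in I} e_p(\alpha x)|\leq \min(|I|,p/2|\alpha|)$) and produces the claimed $Cp|I|$.

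To bound the inner sum, I would substitute $g(a) = f(f^{-1}(a) + \gamma)$ and change variables via $b = f^{-1}(a)$, which is a bijection of $\mathbf{F}_p$, to rewrite it as
\begin{equation*}
\sum_{a\in\mathbf{F}_p} e_p(\alpha a + k g(a)) = \sum_{b\in\mathbf{F}_p} e_p(\alpha f(b) + k f(b+\gamma)),
\end{equation*}
with an $O(1)$ error coming from the bounded number of poles of $f(b)$ and $f(b+\gamma)$, which gets absorbed into the constant. This is a sum of $e_p$ applied to a single rational function of $b$ whose degree is bounded in terms of $d$, so the Weil bound (Lemma \ref{lem: weil bound}) will deliver the desired $Cp^{1/2}$ estimate as soon as one verifies that $\alpha f(b) + k f(b+\gamma)$ is not a constant function of $b$.

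The non-constancy verification is the main obstacle, and it is precisely the reason Lemma \ref{lem: not const} was established. For $\alpha = 0$ the function reduces to $kf(b+\gamma)$, which is non-constant since $f\in\mathcal{B}(p,d)$ is not constant and $k\neq 0$. For $\alpha\neq 0$, I would apply Lemma \ref{lem: not const} with the coefficient of $P(x)/Q(x)$ taken to be our $\alpha$ (playing the role of $\beta$ in that lemma) and the coefficient of $P(x+\gamma)/Q(x+\gamma)$ taken to be our $k$; the hypotheses $\beta, \gamma \neq 0$ there translate exactly to $\alpha, \gamma \neq 0$, both of which hold. Once the non-constancy is in hand, the Weil bound applies uniformly in $\alpha$ and $k$, and the remaining arithmetic is a routine transcription of the calculation at the end of Lemma \ref{lem: complete estimate rational}.
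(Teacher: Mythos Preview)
Your approach is essentially identical to the paper's: both unwind the square via the same Fourier step, change variables through the bijection $f$ to reach an exponential sum of the form $\sum_b e_p(\alpha f(b)+kf(b+\gamma))$ (the paper's $\alpha f(a-\gamma)+kf(a)$ is the same sum after a shift), and then invoke Lemma~\ref{lem: not const} to justify the Weil bound. The remaining geometric-series estimate is copied verbatim from Lemma~\ref{lem: complete estimate rational}, exactly as you say.

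One small point you omit and the paper does not: Lemma~\ref{lem: not const} carries the hypothesis that $P,Q$ have degree at most $p/4$, so you cannot apply it blindly for all $p$ and $d$. The paper dispatches this in one line at the start by observing that if $d\geq p/4$ then the left-hand side is trivially at most $p|I|^2\leq p^2|I|\leq 4d\cdot p|I|$, so taking $C\geq 4d$ covers that regime; for $d<p/4$ the degree hypothesis of Lemma~\ref{lem: not const} is then satisfied. You should insert this remark before invoking that lemma.
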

\begin{proof}
Since the constant $C$ can depend on $d$, if we take $C$ to be at least $4d$, then if $d\geq p/4$, the statement is trivially true as the left hand side is at most $p|I|^2\leq p^2|I|$. Thus, we assume that $d<p/4$.
	
Proceeding as in the proof of Lemma \ref{lem: complete estimate rational}, we have
\begin{equation*}
\begin{split}
    &\sum_{n=1}^p\left|\sum_{x\in I}e_p(kg(x+n)) \right|^2
    \\=&\sum_{x,y\in I}\sum_{\alpha=1}^p\sum_{a,b=1}^p p^{-1} e_p(k(a-b))e_p(\alpha(g^{-1}(a)-g^{-1}(b)-x+y))
    \\=&p^{-1}\sum _{\alpha=1}^p\left|\sum_{a=1}^p e_p(\alpha f(a-\gamma)+k f(a))\right|^2\left|\sum_{x\in I}e_p(\alpha x)\right|^2,
\end{split}
\end{equation*}
where the substitution $a\mapsto f(a)$ is made. Then as $\alpha f(a-\gamma)+kf(a)$ is not constant in $a$ by Lemma \ref{lem: not const}, we can apply Lemma \ref{lem: weil bound}. The proof then proceeds as before.
\end{proof}
The rest of the arguments in Section \ref{sec: exp sum bounds} are unchanged, leading to a version of Proposition \ref{prop: main sol estimate} for the function $g$, which completes the argument.

\section*{Acknowledgements}
This research was supported in part by NSERC. The author would like to thank Steve Butler, Alex Cowan, Persi Diaconis, and Kannan Soundararajan for their help. The author would also like to thank an anonymous referee for many suggestions which greatly improved the exposition.

\bibliography{bibliography.bib}

\providecommand{\bysame}{\leavevmode\hbox to3em{\hrulefill}\thinspace}
\providecommand{\MR}{\relax\ifhmode\unskip\space\fi MR }
\providecommand{\MRhref}[2]{%
  \href{http://www.ams.org/mathscinet-getitem?mr=#1}{#2}
}
\providecommand{\href}[2]{#2}
\begin{thebibliography}{10}

\bibitem{ABLS07}
Noga Alon, Itai Benjamini, Eyal Lubetzky, and Sasha Sodin,
  \emph{Non-backtracking random walks mix faster}, Commun. Contemp. Math.
  \textbf{9} (2007), no.~4, 585--603.

\bibitem{BQZ20}
Charles Bordenave, Yanqi Qiu, and Yiwei Zhang, \emph{Spectral gap of sparse
  bistochastic matrices with exchangeable rows}, Ann. Inst. Henri Poincar\'{e}
  Probab. Stat. \textbf{56} (2020), no.~4, 2971--2995.

\bibitem{BV19}
Emmanuel Breuillard and Péter~P. Varjú, \emph{Cut-off phenomenon for the ax+b
  {M}arkov chain over a finite field}, 2019.

\bibitem{BH13}
T.~D. Browning and A.~Haynes, \emph{Incomplete {K}loosterman sums and
  multiplicative inverses in short intervals}, Int. J. Number Theory \textbf{9}
  (2013), no.~2, 481--486.

\bibitem{CD20}
Sourav Chatterjee and Persi Diaconis, \emph{Speeding up {M}arkov chains with
  deterministic jumps}, Probab. Theory Related Fields \textbf{178} (2020),
  no.~3-4, 1193--1214.

\bibitem{C89}
Fan Chung, \emph{Diameters and eigenvalues}, J. Amer. Math. Soc. \textbf{2}
  (1989), no.~2, 187--196.

\bibitem{C05}
\bysame, \emph{{L}aplacians and the {C}heeger inequality for directed graphs},
  Ann. Comb. \textbf{9} (2005), no.~1, 1--19.

\bibitem{CDG87}
Fan Chung, Persi Diaconis, and Ron Graham, \emph{Random walks arising in random
  number generation}, Ann. Probab. \textbf{15} (1987), no.~3, 1148--1165.

\bibitem{CF04}
Colin Cooper and Alan Frieze, \emph{The size of the largest strongly connected
  component of a random digraph with a given degree sequence}, Combin. Probab.
  Comput. \textbf{13} (2004), no.~3, 319--337.

\bibitem{DHI20}
Persi Diaconis, Jimmy He, and I.~Martin Isaacs, \emph{The square and add
  {M}arkov chain}, 2020.

\bibitem{EV20}
Sean Eberhard and P\'{e}ter~P. Varj\'{u}, \emph{Mixing time of the
  {C}hung-{D}iaconis-{G}raham random process}, Probab. Theory Related Fields
  \textbf{179} (2021), no.~1-2, 317--344.

\bibitem{GS10}
Jaime Gutierrez and Igor~E. Shparlinski, \emph{Expansion of orbits of some
  dynamical systems over finite fields}, Bull. Aust. Math. Soc. \textbf{82}
  (2010), no.~2, 232--239.

\bibitem{HB13}
D.~R. Heath-Brown, \emph{{B}urgess's bounds for character sums}, Number theory
  and related fields, Springer Proc. Math. Stat., vol.~43, Springer, New York,
  2013, pp.~199--213.

\bibitem{HSS20}
Jonathan Hermon, Allan Sly, and Perla Sousi, \emph{Universality of cutoff for
  graphs with an added random matching}, 2020.

\bibitem{H06}
Martin Hildebrand, \emph{On the {C}hung-{D}iaconis-{G}raham random process},
  Electron. Comm. Probab. \textbf{11} (2006), 347--356.

\bibitem{H09}
\bysame, \emph{A lower bound for the {C}hung-{D}iaconis-{G}raham random
  process}, Proc. Amer. Math. Soc. \textbf{137} (2009), no.~4, 1479--1487.

\bibitem{H19}
\bysame, \emph{On a lower bound for the {C}hung-{D}iaconis-{G}raham random
  process}, Statist. Probab. Lett. \textbf{152} (2019), 121--125.

\bibitem{KLMMSS16}
Sergei~V. Konyagin, Florian Luca, Bernard Mans, Luke Mathieson, Min Sha, and
  Igor~E. Shparlinski, \emph{Functional graphs of polynomials over finite
  fields}, J. Combin. Theory Ser. B \textbf{116} (2016), 87--122.

\bibitem{MT06}
Ravi Montenegro and Prasad Tetali, \emph{Mathematical aspects of mixing times
  in {M}arkov chains}, Found. Trends Theor. Comput. Sci. \textbf{1} (2006),
  no.~3, x+121.

\bibitem{N11}
Richard Neville, III, \emph{On lower bounds of the {C}hung-{D}iaconis-{G}raham
  random process}, ProQuest LLC, Ann Arbor, MI, 2011, Thesis (Ph.D.)--State
  University of New York at Albany.

\bibitem{PMMY01}
A.~Peinado, F.~Montoya, J.~Mu\~{n}oz, and A.~J. Yuste, \emph{Maximal periods of
  {$x^2+c$} in {$\mathbb{F}_q$}}, Applied algebra, algebraic algorithms and
  error-correcting codes ({M}elbourne, 2001), Lecture Notes in Comput. Sci.,
  vol. 2227, Springer, Berlin, 2001, pp.~219--228.

\bibitem{VS04}
Troy Vasiga and Jeffrey Shallit, \emph{On the iteration of certain quadratic
  maps over {${\rm GF}(p)$}}, Discrete Math. \textbf{277} (2004), no.~1-3,
  219--240.

\end{thebibliography}
\bibliographystyle{amsplain}

\end{document}